\newtheorem{theorem}{Theorem}[section]
\newtheorem{lemma}[theorem]{Lemma}
\newtheorem{corollary}[theorem]{Corollary}
\newtheorem{definition}{Definition}[section]
\newtheorem{remark}{Remark}[section]
\theoremstyle{definition}
\newtheorem{example}{Example}[section]
\newcommand{\R}{\mathbb{R}}
\newcommand{\N}{\mathbb{N}}
\newcommand{\Rinfty}{\mathbb{R}\cup\{\infty\}}
\newcommand{\st}{\;|\;}
\newcommand{\bigst}{\,\big|\,}
\newcommand{\op}[1]{\operatorname{#1}}
\newcommand{\Ecal}{\mathcal{E}}
\newcommand{\Vcal}{\mathcal{V}}
\newcommand{\Acal}{\mathcal{A}}
\newcommand{\Jcal}{\mathcal{J}}
\newcommand{\Mcal}{\mathcal{M}}
\newcommand{\Ccal}{\mathcal{C}}
\newcommand{\Pcal}{\mathcal{P}}
\newcommand{\Scal}{\mathcal{S}}
\newcommand{\Bcal}{\mathcal{B}}
\newcommand{\Id}{\text{Id}}
\newcommand{\ContinuousArgmin}{\tilde{\mathcal{M}}}
\newcommand{\InexactArgmin}{\mathcal{M}}
\newcommand{\ExactArgmin}{\mathcal{M}^{\text{ex}}}
\DeclareMathOperator*{\argmin}{arg\,min}
\providecommand{\inner}[2]{\langle #1,#2 \rangle}
\begin{document}

\title[TNNMG Methods for Block-Separable Minimization Problems]{
Truncated Nonsmooth Newton Multigrid Methods for Block-Separable Minimization Problems}

\author[Gr\"aser]{Carsten Gr\"aser}
\address{Carsten Gr\"aser\\
Freie Universit\"at Berlin\\
Institut f\"ur Mathe\-matik\\
Ar\-nim\-allee~6\\
14195~Berlin\\
Germany}
\email{graeser@mi.fu-berlin.de}

\author[Sander]{Oliver Sander}
\address{Oliver Sander\\
Technische Universität Dresden\\
Institut für Numerische Mathematik\\
Zellescher Weg 12--14\\
01069 Dresden\\
Germany}
\email{oliver.sander@tu-dresden.de}

\dedicatory{In memory of Elias Pipping (1986--2017)}

\thanks{Parts of this work were motivated by enjoyable discussions with Elias Pipping.}

\begin{abstract}
 The Truncated Nonsmooth Newton Multigrid (TNNMG) method is a robust and efficient solution method
 for a wide range of block-separable convex minimization problems, typically stemming from discretizations
 of nonlinear and nonsmooth partial differential equations.  This paper proves global convergence
 of the method under weak conditions both on the objective functional, and on the local inexact
 subproblem solvers that are part of the method.  It also discusses a range of algorithmic choices
 that allows to customize the algorithm for many specific problems.
 Numerical examples are deliberately omitted, because many such examples
 have already been published elsewhere.
\end{abstract}

\maketitle

\noindent \emph{AMS classification:
                65K15, 
                90C25, 
                49M20  
                }  \\

\noindent \emph{Keywords:} multigrid, convex minimization, global convergence, block separable \\


\section{Introduction}

We consider minimization problems for energy functionals having
block-separable nonsmooth terms.
Given an objective functional $\Jcal : \R^n \to \R \cup \{\infty\}$,
we assume that it has the form
\begin{align}
\label{eq:smooth_nonsmooth_split}
 \Jcal = \Jcal_0 + \varphi,
\end{align}
where $\Jcal_0:\R^n \to \R$ is coercive and continuously differentiable,
and $\varphi : \R^n \to \Rinfty$ is block separable, i.e.,
there are functionals
$\varphi_i : \R^{n_i} \to \Rinfty$, $i=1,\dots,M$
that are convex, proper, lower semi-continuous,
and continuous on their domains such that
\begin{align*}
    \varphi(v) = \sum_{i=1}^M \varphi_i(v_i),
\end{align*}
where $\sum_{i=1}^M n_i = n$ and we implicitly identify
$\R^n$ with $\prod_{i=1}^M \R^{n_i}$.

Such minimization problems occur in many fields of science; however, we are
mainly interested in the case where they result from discretized nonsmooth
partial differential equations (PDEs).  Examples of such PDEs include contact and
friction models in solid mechanics~\cite{krause_sander:2005,pipping_sander_kornhuber:2015},
certain models of porous media flow~\cite{berninger_kornhuber_sander:2011}, but
also many variants of Allen--Cahn-type phase-field models~\cite{KornhuberKrause2006}.  The
block-structure is typically either induced by grouping all vector components
of a given vector-valued degree of freedom, or by grouping the degrees of
freedom of a grid element in a Discontinuous-Galerkin-type discretization.
Nonsmoothness can originate from the PDE itself, or from local convex constraints
to admissible sets $K_i$, which can be incorporated through local indicator
functionals $\varphi_i = \chi_{K_i}$.
We do not make use of the relationship of $\Jcal$ to PDEs other than by implicitly assuming that the
individual block sizes $n_i$ are small compared to their number $M$,
and by proposing to solve linear subproblems inexactly by
multigrid methods.

The nondifferentiable terms $\varphi_i$ make
minimizing functionals of the type~\eqref{eq:smooth_nonsmooth_split}
notoriously difficult.
Interior-point methods
\cite{boyd_vandenberghe:2004}
are expensive, because they solve entire sequences of
linear problems, and preserving robust global convergence is difficult,
if these problems are solved only inexactly.
The same issues also plague semismooth Newton~\cite{penot:2012}
and classical active-set methods,
which are closely related
\cite{BergouniouxItoKunisch:1999, Hoppe:1987, hintermueller_ito_kunisch:2002}.
Such methods do not actively exploit convexity and block-separability of the objective functional.
They also struggle with the unbounded gradients that appear in several practically relevant
phase-field models, which lead to ill-conditioned linear systems.

Similarly, the very generic subgradient and bundle methods make no use of the
block structure or convexity, either.  They also disregard any second-order information,
and they are therefore very slow, and convergence is not always guaranteed~\cite{geiger_kanzow:2002}.
In contrast, local relaxation and operator splitting approaches use the
specific problem structure to obtain global convergence, but are also
very slow for problems resulting from discretized PDEs~\cite{carstensen:1997,KornhuberKrause2006}.

In this paper we present the Truncated Nonsmooth Newton Multigrid (TNNMG) method,
a nonsmooth multigrid method for block-separable nonsmooth minimization problems.
While we have proposed TNNMG before for various more specialized problems
\cite{GraeserKornhuber2009,GraeserSander2014,GraeserSackSander2009,graeser_sander:2014},
we now present it as a general framework that includes all previous incarnations
of TNNMG as special cases.

The TNNMG method combines local relaxation methods
with a generalized Newton approach.
It consists of a nonlinear pre-smoother followed by an inexact linear
correction step, typically one multigrid iteration. Thus it can be interpreted
alternatively as a multigrid method with a nonlinear fine grid smoother, or as
an inexact Newton method with a nonlinear corrector step~\cite{sander:2017}.
The method does not regularize the problem, and does not involve parameters that would
need to be selected manually.
Numerous numerical experiments have shown that it achieves multigrid-like convergence
behavior (i.e., mesh-independent convergence rates and linear time complexity)
on a wide range of difficult nonlinear problems if reasonable initial iterates
are available. In case of discretized PDEs, such initial iterates can typically be obtained
by \emph{nested iteration}, i.e., by using inexact solutions for coarser
discretizations \cite{GraeserKornhuber2009}.
We emphasize that good initial iterates are not mandatory for convergence itself,
but only to obtain multigrid-like convergence speed.

In this paper we prove global convergence of the method under very weak conditions
on the functional, the smoother, and the linear correction. The convergence result
subsumes previous results for obstacle~\cite{GraeserKornhuber2009} and contact problems~\cite{GraeserSackSander2009},
as well as for problems with polyhedral nonsmoothness~\cite{GraeserSander2014,KornhuberKrause2006}.
Unlike those previous results, we allow search spaces that do not coincide with the block
structure, and we even allow non-direct sums of search spaces. This in particular
allows us to reprove the main results of~\cite{GraeserSander2014} in a much
simpler way.
We show convergence of the presented method to stationary
points, such that we get convergence to minimizers, e.g.,
if $\Jcal_0$ is convex.
While the convergence proof only requires continuous
differentiability of $\Jcal_0$, the method uses a
Newton-type linearization in a substep such that
we need at least Lipschitz continuity of $\Jcal_0'$.
The strongest results are obtained for strictly convex $\Jcal$, but biconvex and certain quasiconvex
problems are covered by the theory as well.

The convergence proof relies mainly on the convergence of the Gauß--Seidel-type pre-smoother
by itself. Weak conditions on such smoothers are stated that ensure global convergence
of TNNMG. These weak conditions allow various cheap inexact solvers to be used for the
local nonsmooth subproblems. We propose various such inexact solvers and prove that
they fulfill the conditions.

On the other hand, while the linear correction step does not really interfere in the
convergence proof at all, it is crucial for \emph{fast} convergence of the TNNMG method.
The step involves constructing a sufficiently large subspace on which a Newton-type correction
problem is well defined, and then solving this problem in a suitably inexact way.
The standard way is to use a single geometric or algebraic multigrid step here.
Such a choice then leads to the interpretation of the overall method as a multigrid
method. However, other choices of solvers are possible, and can be useful in
certain situations.

\bigskip

This article is structured as follows:  We begin by formally stating the
problem in Chapter~\ref{sec:block_separable_functionals}.
In Chapter~\ref{sec:tnnmg} we introduce the TNNMG method,
and in Chapter~\ref{sec:convergence} we prove its global convergence.
Chapter~\ref{sec:smoothers} discusses ways to inexactly solve the local
minimization problems that make up the nonlinear multigrid smoother.
Chapter~\ref{sec:coarse_grid_corrections} introduces truncated
linearized correction problems, and Chapter~\ref{sec:linear_mg} discusses efficient inexact
solvers for those problems, which are crucial
to obtain the overall multigrid-like convergence speed of TNNMG.
An appendix collects a few technical results.

The paper deliberately omits numerical benchmarks of the method, because many of these have already
appeared in the literature.  Among others, we refer the reader to \cite{GraeserSackSander2009}
for contact problems, \cite{GraeserSackSander2009,graeser_sander:2014} for phase-field problems,
and \cite{sander:2017} for the performance of TNNMG on small-strain plasticity problems.

\section{Block-separable nonsmooth functionals}
\label{sec:block_separable_functionals}

The TNNMG method is designed to solve nonsmooth minimization problems
\begin{align}
\label{eq:min_problem_revisited}
    u^* \in \R^n: \qquad
        \Jcal(u^*) \leq \Jcal(v) \qquad \forall v \in \R^n,
\end{align}
for possibly nondifferentiable functionals $\Jcal:\R^n \to \Rinfty$.
Throughout the paper we will assume that $\Jcal$ is
proper, coercive, lower semi-continuous, continuous on its domain
$\op{dom} \Jcal \colonequals \{x \in \R^n \st \Jcal(x) < \infty\}$,
and that $\op{dom} \Jcal$ is convex, but not necessarily closed.

The algorithm is based on a subspace decomposition
\begin{align}\label{eq:subspace_splitting}
    \R^n = \sum_{k=1}^m V_k
\end{align}
into $m$ subspaces $V_k$ of dimensions $n_k \colonequals \dim V_k$, $k=1,\dots,m$.
The sum in \eqref{eq:subspace_splitting} does not need to be direct.
For practical purposes we will also assume coordinate systems for each
subspace in the sense that we identify $V_k$ with $\R^{n_k}$ using
prolongation maps
\begin{align}
 \label{eq:prolongation_operators}
    P_k : \R^{n_k} \to \R^n,
    \qquad
    k = 1,\dots,m,
\end{align}
such that for each $k$, $P_k$ is an isomorphism from $\R^{n_k}$ to $V_k$.

If $\Jcal$ is differentiable,
then the splitting \eqref{eq:subspace_splitting}
is sufficient to characterize minimizers of $\Jcal$ in the sense that
$u \in \R^n$ is a global minimizer of $\mathcal{J}$ if it is a global minimizer
with respect to each $V_k$.
For nondifferentiable $\Jcal$ this is no longer true,
unless the subspace splitting is compatible with the nondifferentiable
structure of $\Jcal$. For counterexamples we refer to \cite{GlowinskiLionsTremolieres81}.
Here, we will make compatibility an additional assumption.
\begin{definition}
    The decomposition \eqref{eq:subspace_splitting} is called
    \emph{compatible with $\Jcal$} if
    \begin{align}\label{eq:subspace_optimality}
        \Jcal(u) \leq \Jcal(u + v) \qquad \forall v \in V_k, \quad k=1,\dots,m
    \end{align}
    implies global optimality
    \begin{align*}
        \Jcal(u) \leq \Jcal(u+v) \qquad \forall v \in \R^n.
    \end{align*}
\end{definition}

An important example for this situation are block-separable nonsmooth
problems with a corresponding subspace decomposition.
We consider problems with a nonsmooth, convex, and separable
part, and a smooth but possibly nonconvex part.

\begin{definition}
    \label{def:block_separable}
    Let $R : \R^n \to \prod_{k=1}^M \R^{N_k}$ be an isomorphism with
    $R = (R_1, \dots, R_M)$ and surjective linear maps $R_k : \R^n \to \R^{N_k}$.
    We say that a function $\Jcal: \R^n \to \R \cup \{\infty\}$
    is \emph{block-separable nonsmooth with respect to $R$}
    if there is a continuously differentiable function $\Jcal_0: \R^n \to \R$,
    and convex, proper, lower-semicontinuous functions
    $\varphi_k:\R^{N_k} \to \R \cup \{ \infty \}$
    such that
    \begin{align*}
        \Jcal(v) = \Jcal_0(v) + \underbrace{\sum_{k=1}^M \varphi_k (R_k v)}_{\equalscolon \varphi(v)}.
    \end{align*}
\end{definition}

We note that for any such isomorphism $R : \R^n \to \prod_{k=1}^M \R^{N_k}$,
the maps $R_k : (\ker R_k)^\perp \to \R^{N_k}$ are isomorphisms themselves,
which together with $n = \sum_{k=1}^M N_k$ gives rise to the direct
sum representation
\begin{align}\label{eq:direct_sum}
    \R^n = \bigoplus_{k=1}^M (\op{ker}R_k)^\perp.
\end{align}

Using the subspaces induced by the block-separability leads to a compatible decomposition.
However, the following result shows that more general decompositions are also possible.

\begin{lemma}
    \label{lem:block_separable_compatible}
    Let $\Jcal: \R^n \to \Rinfty$ be convex and block-separable nonsmooth
    with respect to an isomorphism $R : \R^n \to \prod_{k=1}^M \R^{N_k}$.
    Assume that for any block $k=1,\dots,M$ the decomposition contains
    a subspace $V_{k'}$ with $(\op{ker} R_k)^\perp \subset V_{k'}$.
    Then the decomposition is compatible with $\Jcal$.
\end{lemma}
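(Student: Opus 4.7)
Since $\Jcal$ is convex, global optimality at $u$ is equivalent to the subdifferential inclusion $0 \in \partial \Jcal(u)$. The plan is to establish this inclusion by extracting block-wise subgradients from the subspace-optimality hypothesis and assembling them into a global witness using the direct-sum structure $\R^n = \bigoplus_k (\op{ker} R_k)^\perp$ from \eqref{eq:direct_sum}.

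First, I would apply the sum rule (valid because $\Jcal_0$ is differentiable and finite) together with the chain rule for convex composition with surjective linear maps (valid because each $R_k$ is surjective) to obtain
\[
    \partial \Jcal(u) = \nabla \Jcal_0(u) + \sum_{k=1}^M R_k^T \, \partial \varphi_k(R_k u).
\]
Hence $0 \in \partial \Jcal(u)$ reduces to producing $\eta_k \in \partial \varphi_k(R_k u)$, $k=1,\dots,M$, with $-\nabla \Jcal_0(u) = \sum_k R_k^T \eta_k$. To manufacture these $\eta_k$, I would exploit the hypothesis block by block: for each $k$, subspace optimality on $V_{k'} \supset (\op{ker} R_k)^\perp$ gives $\Jcal(u+v) \geq \Jcal(u)$ for every $v \in (\op{ker} R_k)^\perp = \op{im} R_k^T$. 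Parametrizing such directions as $v = R_k^T \eta$ via the isomorphism $R_k^T : \R^{N_k} \to (\op{ker} R_k)^\perp$ and taking right directional derivatives of $\Jcal$ at $u$ in the directions $\pm R_k^T \eta$ yields a pointwise inequality in $\eta$ from which a candidate subgradient $\eta_k \in \partial \varphi_k(R_k u)$ can be read off. Using the uniqueness of the direct-sum decomposition of $-\nabla \Jcal_0(u)$ along the subspaces $\op{im} R_k^T$, these block-wise $\eta_k$ then combine into the required identity $-\nabla \Jcal_0(u) = \sum_k R_k^T \eta_k$.

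The main obstacle is decoupling the blocks in the directional-derivative inequality. A direction $v = R_k^T \eta \in (\op{ker} R_k)^\perp$ need not lie in $\bigcap_{j\neq k} \op{ker} R_j$, so the images $R_j v$ with $j \neq k$ are generally nonzero and the inequality carries unwanted contributions $\varphi_j'(R_j u; R_j R_k^T \eta)$. Cleanly isolating the pure $\varphi_k$-inequality needed to certify $\eta_k \in \partial \varphi_k(R_k u)$, and matching these $\eta_k$ to the unique direct-sum decomposition of $-\nabla \Jcal_0(u)$ across the ranges $\op{im} R_k^T$, is where the hypothesis $(\op{ker} R_k)^\perp \subset V_{k'}$, the isomorphism property of $R$, and convexity of $\Jcal$ must all be used in concert.
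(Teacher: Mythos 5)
Your proposal is a plan rather than a proof: the step that would do all the work---producing, for each $k$, a subgradient $\eta_k \in \partial\varphi_k(R_k u)$ from the subspace optimality and then assembling $-\Jcal_0'(u) = \sum_k R_k^T \eta_k$---is never carried out, and your closing paragraph concedes exactly this. Concretely, differentiating $t \mapsto \Jcal(u + t R_k^T \eta)$ at $t = 0^+$ gives $\inner{R_k \Jcal_0'(u)}{\eta} + \sum_j \varphi_j'(R_j u; R_j R_k^T \eta) \ge 0$, and you offer no mechanism for removing the cross terms $j \neq k$; saying the hypotheses ``must all be used in concert'' is not an argument. The assembly step is equally unsubstantiated: uniqueness of the decomposition of $-\Jcal_0'(u)$ along the spaces $(\op{ker} R_k)^\perp$ is useless unless you already know that the $k$-th component has the form $R_k^T \eta_k$ with $\eta_k \in \partial\varphi_k(R_k u)$, which is precisely the unproved claim. (In the situation where the blocks of $R$ have mutually orthogonal row spaces, i.e.\ $R_j R_k^T = 0$ for $j \neq k$, the cross terms vanish, $\eta_k = -(R_k R_k^T)^{-1} R_k \Jcal_0'(u)$ works, and $\sum_k R_k^T \eta_k = -\Jcal_0'(u)$ because the orthogonal projections onto the $(\op{ker} R_k)^\perp$ then sum to the identity; but you neither state nor exploit such a property, so your route stops at its central step.)

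For comparison, the paper's proof avoids subdifferential calculus entirely. Given an arbitrary $w \in \R^n$, it decomposes $v = w - u = \sum_k v_k$ with $v_k \in (\op{ker} R_k)^\perp \subset V_{k'}$ via the direct sum \eqref{eq:direct_sum}; using $R_k v_k = R_k v$ and $R_i v_k = 0$ for $i \neq k$, the separable part collapses as in \eqref{eq:nonlinearity_splitting_auxiliary}, so testing the $k$-th variational inequality \eqref{eq:subspace_vi} with $w_{k'} = u + v_k$ and summing over $k$ yields $\inner{\Jcal_0'(u)}{w-u} + \varphi(w) - \varphi(u) \ge 0$, and convexity of $\Jcal$ concludes. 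The decoupling you identify as the obstacle is handled there by testing each inequality only with the block component $v_k$ of the global direction---for which the other $\varphi_j$-differences drop out by the block structure---rather than pairing a generic direction of $(\op{ker} R_k)^\perp$ against all blocks; note that the property $R_i v_k = 0$ the paper invokes is the same orthogonality issue you flag (it is immediate when, as in all the paper's examples, $R$ is a regrouping of coordinates), so you have located a genuine subtlety, but locating it is not resolving it. To complete your dual route you would have to justify $R_j R_k^T = 0$ in the setting at hand, or find another way to certify the $\eta_k$; as written, the proposal has a gap at its core.
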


\begin{proof}
    Let $u \in \R^n$ satisfy \eqref{eq:subspace_optimality}.
    To any $k = \{1,\dots,M\}$ we associate an index $k'$ such that $(\op{ker} R_k)^\perp \in V_{k'}$.
    The optimality of $u$
    in $u+V_{k'}$ implies the variational inequalities
    \begin{align}\label{eq:subspace_vi}
        \inner{\Jcal_0'(u)}{w_{k'}-u} + \varphi(w_{k'}) - \varphi(u) \geq 0
        \qquad \forall w_{k'} \in u+V_{k'}
    \end{align}
    for $k=1,\dots,M$.
    Now let $w \in \R^n$ and $v=w-u$.
    Then, according to \eqref{eq:direct_sum},
    there is a decomposition
    \begin{align*}
        v &= \sum_{k=1}^M v_k,
        \qquad
        v_k \in (\op{ker}R_k)^\perp \subset V_{k'}
    \end{align*}
    with some $k'$ for each $k$.
    Since the sum  \eqref{eq:direct_sum} is direct, we have
    $R_k v = R_k v_k$ and $R_i v_k = 0$ for $i\neq k$, which implies
    \begin{align}\label{eq:nonlinearity_splitting_auxiliary}
        \varphi(u+v_k) - \varphi(u)
        = \sum_{i=1}^M \big[ \varphi_i(R_i (u+ v_k)) - \varphi_i(R_i u) \big]
        = \varphi_k(R_k (w)) - \varphi_k(R_k u).
    \end{align}

    Inserting $w_{k'}=u+v_k$ into the variational inequalities \eqref{eq:subspace_vi},
    summing up for all $k = 1,\dots,M$, and using \eqref{eq:nonlinearity_splitting_auxiliary}
    finally gives
    \begin{align*}
        \inner{\Jcal_0'(u)}{w-u} + \varphi(w) - \varphi(u) \geq 0.
    \end{align*}
    Since $w \in \R^n$ was arbitrary and $\Jcal$ is convex,
    this proves the assertion.
\end{proof}

Lemma~\ref{lem:block_separable_compatible} shows compatibility for general
possibly non-direct subspace decompositions. Such decompositions occur, e.g.,
when using domain decomposition methods
\cite{HeimsundTaiXu2002, Tai2003, TaiXu2001,carstensen:1997}
and nonlinear multi-level
relaxation methods \cite{Mandel1984, GraeserKornhuber2009, Badea2006, Kornhuber1994, Kornhuber1996}.
In the present paper we will concentrate on
subspace decompositions where we either have
\begin{align}
    \label{eq:decomposition_by_blocks}
    (\op{ker} R_k)^\perp = V_k \qquad k=1,\dots,m=M
\end{align}
or where each $(\op{ker} R_k)^\perp$ can be written as a direct or non-direct sum of a subset
of the subspaces $V_{k'}$.
In the following we list a few examples of suitable
energy functionals. For a set $U$, we denote by $\chi_U$ the indicator functional
\begin{align*}
    \chi_U(z) \colonequals
    \begin{cases}
        0 & \text{if $z \in U$},\\
        \infty & \text{otherwise}.
    \end{cases}
\end{align*}

\begin{example}
    Let $A \in \R^{n \times n}$ be symmetric and positive definite, $b \in \R^n$,
    and let $K_k \subset \R$, $k=1,\dots,n$ be nonempty, closed, possibly unbounded intervals.
    Then
    \begin{equation*}
        \Jcal(v)
        \colonequals
        \underbrace{\frac{1}{2} \langle Av,v\rangle - \langle b,v\rangle}_{\equalscolon \Jcal_0(v)}
        + \sum_{k=1}^n \underbrace{\chi_{K_k}(v_k)}_{\equalscolon \varphi_k(v_k)}
    \end{equation*}
    is block-separable nonsmooth, and the decomposition of $\R^n$ into
    $V_k = \{v \in \R^n \st v_i = 0 \quad \forall i\neq k\}$
    is compatible.
    This is the energy functional for the classic obstacle problem of
    minimizing $\Jcal_0$ in the hypercube $K=\prod_{i=1}^n K_i$, see~\cite{GraeserKornhuber2009}.
\end{example}

\begin{example}
    \label{example:anisotropic_problem}
    Let $D_i \in \R^{d \times n}$, and $\gamma_i : \R^{d} \to \R$ be convex,
    continuously differentiable functions for $i=1,\dots,l$. Then the functional
    \begin{align*}
        \Jcal(v) \colonequals \underbrace{\sum_{i=1}^{l} \gamma_i (D_iv)}_{\equalscolon \Jcal_0(v)}
        + \sum_{k=1}^n \underbrace{\chi_{[0,1]}(v_k)}_{\equalscolon \varphi_k(v_k)}
    \end{align*}
    is block-separable nonsmooth, and the decomposition into
    $V_k = \{v \in \R^n \st v_i = 0 \quad \forall i\neq k\}$
    is compatible.
    Functionals of this form with $\gamma_i(z) = \omega_i\gamma(z)$ and a quadrature
    weight $\omega_i$ are obtained, e.g., for discretized minimal surface equations
    with an obstacle \cite{Hardering2013}, and shallow-ice glacier models \cite{BuelerEtAl2012, GraeserJouvet2012}.
    In both cases $D_i$ represents the local evaluation of the gradient of a finite
    element function. For the minimal surface equation we have
    $\gamma(z) = \sqrt{1+\|z\|_2^2}$, while the $p$-Laplace operator in shallow-ice
    models leads to $\gamma(z) = \|z\|_2^p$.
    For anisotropic phase-field models one obtains similar functions
    with an additional quadratic term~\cite{GraeserKornhuberSack2011}.
\end{example}

\begin{example}
\label{ex:vector_valued_phasefield}
    Let $A$ and $b$ be as above,
    $R: \R^n \to (\R^L)^m$ an isomorphism, 
    and
    \begin{align}
    \label{eq:gibbs_simplex}
        G \colonequals \Big\{v \in \R^L \st \sum_{i=1}^L v_i=1, \; v_i\geq 0 \quad \forall i =1,\dots,L\Big\}
    \end{align}
    the $L$-dimensional Gibbs-simplex.
    Then the functional
    \begin{align*}
        \Jcal(v) \colonequals \underbrace{\inner{Av}{v} - \inner{b}{v}}_{\equalscolon \Jcal_0(v)}
        + \sum_{k=1}^m \underbrace{\chi_{G}(R_k v)}_{\equalscolon \varphi_k(R_k v)}
    \end{align*}
    is block-separable nonsmooth, and the decomposition into
    $V_k = (\op{ker} R_k)^\perp$
    is compatible. Note that these subspaces take the form
    $V_k = e_k \otimes \R^L$ if we identify $\R^n = (\R^L)^m$, where $e_k$ is the $k$-th canonical basis vector
    in $\R^m$.
    Functionals of this form occur when discretizing
    multi-component phase-field models~\cite{KornhuberKrause2006}.
    It was shown in \cite{GraeserSander2014,KornhuberKrause2006} that another compatible subspace
    decomposition is given by $V_{k,i} = e_k \otimes \eta_i$,
    where $\eta_1, \dots,\eta_l$ denote the $l=\frac12(L-1)L$
    edge vectors of $G$.
\end{example}

\begin{example}
\label{ex:norm_nondifferentiability}
    Let $A$, $b$, and $R$ as above, $\|\cdot\| : \R^L \to \R$ a norm on $\R^L$,
    and $\omega_k >0$ positive weights.
    Then the functional
    \begin{align*}
        \Jcal(v) = \underbrace{\inner{Av}{v} - \inner{b}{v}}_{\equalscolon\Jcal_0(v)}
        + \sum_{k=1}^m \underbrace{\omega_k\|R_k v\|}_{\equalscolon\varphi_k(R_k v)}
    \end{align*}
    has the desired form with a compatible decomposition
    of $\R^n = (\R^L)^m$ into $V_k = e_k \otimes \R^L$.
    Functionals of this type are, e.g., obtained for certain friction laws,
    and in primal formulations of small-strain plasticity~\cite{han_reddy:2013}.
\end{example}

\section{The Truncated Nonsmooth Newton Multigrid method}
\label{sec:tnnmg}

The \emph{Truncated Nonsmooth Newton Multigrid Method} (TNNMG) was introduced
in \cite{GraeserKornhuber2009} for quadratic obstacle problems, and later generalized to variational
inequalities of the second kind with separable nonsmooth nonlinearities \cite{GraeserSackSander2009, Graeser2011}.
Similar to these cases, the extension to block-separable nonsmooth problems of the form~\eqref{eq:min_problem_revisited}
will be based on a nonlinear block Gau\ss--Seidel iteration and
an additional linear correction step.
For given initial iterate $u^0 \in \op{dom} \Jcal$ and iteration number $\nu \in \N_0$,
one step of the TNNMG method is defined as follows:


\medskip

\begin{algorithm}[H]
\SetAlgoLined
 \KwIn{Given $u^\nu$}\label{alg:tnnmg_smoothing}
    \Begin(Nonlinear pre-smoothing){
        Set $w^{\nu,0} \colonequals u^\nu$\\
  \For{$k=1,\dots,m$}{
      Compute $w^{\nu,k} \in w^{\nu,k-1} + V_k$: \label{enum:gs_local_min}
   \begin{align*}
       w^{\nu,k} \colonapprox
     \argmin_{v \in w^{\nu,k-1} + V_k} \Jcal(v)
   \end{align*}
  }
    Set $u^{\nu+\frac12} \colonequals w^{\nu,m}$
 } \label{alg:tnnmg_smoothing_final}
\end{algorithm}

\newpage

\begin{algorithm}[H]
\SetAlgoLined
\setcounter{AlgoLine}{8}
    \Begin(Truncated linear correction){\label{alg:tnnmg_coarse}
    Determine large subspace $W_\nu \subset \R^n$ such that $\Jcal|_{u^{\nu+\frac12}+W_\nu}$ is $C^2$ near $u^{\nu+\frac12}$.\\
  Compute $v^\nu \in W_\nu$ as \label{enum:coarse_correction}
  \begin{align}
  \label{eq:inexact_newton_problem}
      v^\nu \colonapprox
          -\Bigl(\Jcal''(u^{\nu+\frac12})|_{W_\nu \times W_\nu}\Bigr)^{-1}
          \Bigl(\Jcal'(u^{\nu+\frac12})|_{W_\nu} \Bigr)
  \end{align}
 }
 \Begin(Post-processing){
  Compute the projection $\tilde{v}^\nu \colonequals \Pi_{\op{dom}\Jcal - u^{\nu+1/2}}(v^\nu)$ \label{enum:tnnmg_projection} \\
  Compute $\rho_\nu \in [0,\infty)$ such that $\Jcal(u^{\nu+\frac12} + \rho_\nu \tilde{v}^\nu) \leq \Jcal(u^{\nu+\frac12})$
 }
 \KwOut{Set $u^{\nu+1} \colonequals u^{\nu+\frac12} + \rho_\nu \tilde{v}^\nu$}
  \label{alg:tnnmg_final}
\end{algorithm}

\medskip

Using the prolongation operators $P_k$ of~\eqref{eq:prolongation_operators},
the incremental minimization problems in Line~\ref{enum:gs_local_min}
for a given intermediate iterate $w = w^{\nu,k-1} \in \R^n$ can be written as minimization
problems in $\R^{n_k}$
\begin{align}\label{eq:TNNMG_block_subproblem}
 \argmin_{v \in w + V_k} \Jcal(v)
 = w + \argmin_{v \in V_k} \Jcal(w+v)
 = w + P_k \argmin_{\xi \in \R^{n_k}} \Jcal(w+ P_k \xi).
\end{align}
We are mainly interested in block-separable problems
originating from discretized PDEs where the block-sizes $n_k$ are
independent of the problem size $n$.
If the subspaces $V_k$ are chosen according to the blocks or even finer,
then the size of the subproblems \eqref{eq:TNNMG_block_subproblem}
is in $O(1)$, which allows to use more expensive schemes
for their approximate solution.

In Line~\ref{enum:coarse_correction} we denote by
$\Jcal'(u^{\nu+\frac12})|_{W_\nu}$
and $\Jcal''(u^{\nu+\frac12})|_{W_\nu \times W_\nu}$ the gradient and Hessian of the restriction of the
objective functional $\Jcal$ to the subspaces $W_\nu$ and
$W_\nu \times W_\nu$, respectively.
While $\Jcal$ is in general not even differentiable, these are still well-defined expressions
since $\Jcal$ is twice continuously differentiable on $W_\nu$ by construction of $W_\nu$.
In implementations it is convenient to represent these restricted derivatives with respect
to coordinates in $\R^n$ by extending them with zero
to the orthogonal complement of $W_\nu$.
In such a situation, the restricted Hessian is not invertible as a map $\R^n \to \R^n$.
However, if $\Jcal$ is convex and $\Jcal_0''(v)$ exists and is positive definite
for any $v$ then the correction problem~\eqref{eq:inexact_newton_problem} still has a unique solution in $W_\nu$.
In general this can be expressed by replacing
$(\Jcal''(u^{\nu+\frac12})|_{W_\nu \times W_\nu})^{-1}$ by
the Moore--Penrose pseudo-inverse of $\Jcal''(u^{\nu+\frac12})$ on $\R^n$.
Similar arguments hold if $\Jcal_0$ is not $C^2$, but has a Lipschitz-continuous first
derivative. Then, $\Jcal''$ needs to be replaced by a suitable generalized derivative of $\Jcal'$.

Simply adding the linear coarse grid correction
$v^\nu$ from Line~\ref{alg:tnnmg_coarse} to $u^{\nu + \frac{1}{2}}$ to obtain
the new iterate $u^{\nu+1}$ may lead to
infeasibility of $u^{\nu+1}$, since Line~\ref{alg:tnnmg_coarse}
is not aware of the domain of $\Jcal$.
Adding a damped version of $v^\nu$ can ensure feasibility, but
this may lead to very small damping parameters
and thus to poor convergence.
As an alternative we first compute a Euclidean projection $\tilde{v}^\nu$ of $v^\nu$ into
the domain in Line~\ref{enum:tnnmg_projection}.
While this ensures feasibility of $u^{\nu+\frac12} + \tilde{v}^\nu$
it may still increase energy, because the projection is
only aware of the domain, but not of the values of $\Jcal$.
To ensure energy decrease damping is then applied to the
projected correction.

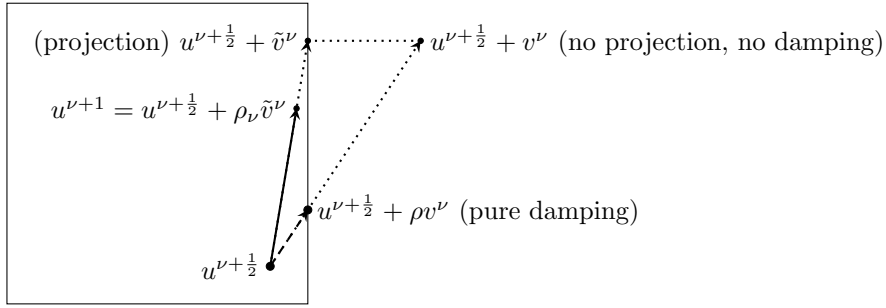
\begin{figure}
    \begin{tikzpicture}[ scale=1.0 ]

    \draw[-] (0,0) -- (4,0) -- (4,4) -- (0,4) -- cycle;

    \coordinate (current) at (3.5,0.5);
    \coordinate (damped) at (4,1.25);
    \coordinate (undamped) at (5.5,3.5);
    \coordinate (projected) at (4,3.5);
    \coordinate (dampedprojected) at (3.85,2.6);

    \filldraw (current) circle (1.5pt);
    \draw (current) node[left] {$u^{\nu+\frac12}$};

    \filldraw (damped) circle (1.5pt);
    \draw (damped) node[right] {$u^{\nu+\frac12} + \rho v^\nu$ (pure damping)};

    \filldraw (undamped) circle (1pt);
    \draw (undamped) node[right] {$u^{\nu+\frac12} + v^\nu$ (no projection, no damping)};

    \filldraw (projected) circle (1pt);
    \draw (projected) node[left] {(projection) $u^{\nu+\frac12} + \tilde{v}^\nu$};

    \filldraw (dampedprojected) circle (1pt);
    \draw (dampedprojected) node[left] {$u^{\nu+1}=u^{\nu+\frac12} + \rho_\nu\tilde{v}^\nu$};

    \draw (current)  edge[dotted, arrows=->, >=stealth, thick] (undamped);
    \draw (current)  edge[dashed, arrows=->, >=stealth, thick] (damped);
    \draw (undamped) edge[dotted, thick] (projected);
    \draw (current)  edge[dotted, arrows=->, >=stealth, thick] (projected);
    \draw (current)  edge[arrows=->, >=stealth, thick] (dampedprojected);
    \end{tikzpicture}
    \caption{Illustration of the TNNMG post-processing steps.
      The square represents the admissible set $\op{dom} \Jcal$.}
    \label{fig:tnnmg_post_processing}
\end{figure}

The post-processing steps are depicted in Figure~\ref{fig:tnnmg_post_processing}.
In particular, note that using a damped, projected
correction may lead to larger steps compared to pure damping
without preceding projection.

\section{Global convergence}
\label{sec:convergence}

We now prove that the TNNMG iteration converges to a stationary point for any initial iterate $u^0$.
For this we need some assumptions on the inexact solution operators
that map $w^{\nu,k-1}$ to $w^{\nu,k}$ in Line~\ref{enum:gs_local_min} of the TNNMG algorithm.
To this end we introduce operators
\begin{align}
    \label{eq:local_inexact_min_op}
    \InexactArgmin_k:\op{dom}\Jcal \to \op{dom} \Jcal, \qquad
    \InexactArgmin_k - \Id : \op{dom}\Jcal \to V_k
\end{align}
such that
\begin{align}
    \label{eq:local_iterates}
    w^{\nu,k} = \InexactArgmin_k(w^{\nu,k-1}), \qquad w^{\nu,0} = u^\nu.
\end{align}
These operators represent the inexact minimization of $\Jcal$ in the
affine subspaces $w^{\nu,k-1} + V_k$ by, e.g., some iterative algorithm.
Using this notation the inexact block Gau\ss--Seidel loop in
Lines~\ref{alg:tnnmg_smoothing}--\ref{alg:tnnmg_smoothing_final}
takes the form
\begin{align}
    \label{eq:gs_substep}
    u^{\nu+\frac12} = \InexactArgmin(u^\nu), \qquad
    \InexactArgmin \colonequals \InexactArgmin_m \circ \dots \circ \InexactArgmin_1.
\end{align}

The convergence proof hinges on the continuity of the compound operators $\Jcal \circ \InexactArgmin_k$.
In case of exact minimization, $\InexactArgmin_k$ is
\begin{align*}
    \InexactArgmin_k(\cdot) = \argmin_{v \in (\cdot)+V_k} \Jcal(v),
\end{align*}
and we get continuity of
$\Jcal \circ \InexactArgmin_k = \min_{v \in (\cdot)+V_k} \Jcal(v)$ as a direct consequence of
Lemma~\ref{lem:exact_min_continuous} below.
However, this continuity may not hold for general inexact minimization
operators $\InexactArgmin_k$.
Therefore we introduce additional operators
\begin{align}
    \label{eq:local_continuous_min_op}
    \ContinuousArgmin_k:\op{dom}\Jcal \to \op{dom} \Jcal, \qquad
    \ContinuousArgmin_k - \Id : \op{dom}\Jcal \to V_k
\end{align}
with continuous $\Jcal \circ \ContinuousArgmin_k$
that bound
the energy decrease realized by $\InexactArgmin_k$ from above, i.e., we assume that
\begin{align*}
    \Jcal(\InexactArgmin_k(v)) \leq \Jcal(\ContinuousArgmin_k(v)) \qquad \forall v \in \op{dom}\Jcal.
\end{align*}
These operators do not appear in the implementation of the
method but will only be used as a tool to prove convergence.

Finally, we introduce an operator $\Ccal:\op{dom}\Jcal \to \op{dom}\Jcal$
that represents the truncated linear correction and the post-processing in
Lines~\ref{alg:tnnmg_coarse}--\ref{alg:tnnmg_final} in the sense that
\begin{align}
    \label{eq:coarse_correction_substep}
    u^{\nu+1} = \Ccal(u^{\nu+\frac12}).
\end{align}
However, this operator does not have to be continuous in any way.
Note that~\eqref{eq:gs_substep} and~\eqref{eq:coarse_correction_substep} together
formalize the TNNMG algorithm.

\begin{theorem}
    \label{thm:gs_stationary}
    Let $\Jcal : \R^n \to \Rinfty$ be coercive, proper, lower-semicontinuous,
    and continuous on its domain, and assume that
    $\Jcal(w+(\cdot))$ has a unique global minimizer in all $V_k$, $k=1,\dots,m$
    for each $w\in \op{dom}\Jcal$.
    For local operators $\InexactArgmin_k$ of the form \eqref{eq:local_inexact_min_op},
    a correction operator $\Ccal:\op{dom}\Jcal \to \op{dom}\Jcal$, and an initial guess
    $u^0 \in \op{dom} \Jcal$
    let $(u^\nu)$ be given by the algorithm
    \eqref{eq:gs_substep} and \eqref{eq:coarse_correction_substep}.
    Assume that there are operators $\ContinuousArgmin_k$ of the form
    \eqref{eq:local_continuous_min_op} such that the following holds:
    \begin{enumerate}
        \item
            Monotonicity: $\Jcal(\InexactArgmin_k(v)) \leq \Jcal(\ContinuousArgmin_k(v)) \leq \Jcal(v)$
            and $\Jcal(\Ccal(v)) \leq \Jcal(v)$ for all $v \in \op{dom}\Jcal$.
        \item
            Continuity: $\Jcal \circ \ContinuousArgmin_k$ is continuous.
        \item\label{enum:stability}
            Stability: $\Jcal(\ContinuousArgmin_k(v)) < \Jcal(v)$ if $\Jcal(v)$ is not minimal in $v + V_k$.
    \end{enumerate}
    Then any accumulation point $u$ of $(u^\nu)$ is stationary in the sense that
    \begin{align}\label{eq:stationary_point}
        \Jcal(u) \leq \Jcal(u + v) \qquad \forall v \in V_k, \quad k=1,\dots,m.
    \end{align}
\end{theorem}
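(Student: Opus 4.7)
The plan is to observe that $\Jcal$ decreases monotonically along both the outer iteration and the inner Gau{\ss}--Seidel sweep, extract a subsequence along which every intermediate iterate converges, and then use continuity of $\Jcal\circ\ContinuousArgmin_k$ together with the stability assumption and the uniqueness of the block minimizers to conclude per-block optimality at the common limit $u$. Concretely, the monotonicity assumption chained with the inner loop gives
\begin{align*}
\Jcal(u^{\nu+1}) = \Jcal(\Ccal(u^{\nu+\frac12})) \leq \Jcal(w^{\nu,m}) \leq \dots \leq \Jcal(w^{\nu,0}) = \Jcal(u^\nu),
\end{align*}
and since $\Jcal$ is proper and coercive it is bounded below, so $\Jcal(u^\nu)\to J^*$ for some $J^* \in \R$. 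The sandwich then forces $\Jcal(w^{\nu,k}) \to J^*$ for every $k \in \{0,\dots,m\}$.

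Next, let $u$ be an accumulation point and choose a subsequence $u^{\nu_j} \to u$. Because every $w^{\nu_j,k}$ lies in the sublevel set $\{\Jcal \leq \Jcal(u^0)\}$, which is bounded by coercivity, each sequence $(w^{\nu_j,k})_j$ is bounded; passing to a further subsequence I may assume $w^{\nu_j,k} \to u^k$ for all $k \in \{0,\dots,m\}$, with $u^0 = u$. Lower semicontinuity gives $\Jcal(u^k) \leq J^* < \infty$, hence $u^k \in \op{dom}\Jcal$, and continuity of $\Jcal$ on its domain then yields $\Jcal(u^k) = J^*$.

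For each block $k$, the monotonicity assumption supplies the squeeze
\begin{align*}
\Jcal(\InexactArgmin_k(w^{\nu_j,k-1})) \leq \Jcal(\ContinuousArgmin_k(w^{\nu_j,k-1})) \leq \Jcal(w^{\nu_j,k-1}),
\end{align*}
whose outer terms converge to $J^*$, so the middle term does too. Continuity of $\Jcal\circ\ContinuousArgmin_k$ combined with $w^{\nu_j,k-1}\to u^{k-1}$ then yields $\Jcal(\ContinuousArgmin_k(u^{k-1})) = J^* = \Jcal(u^{k-1})$, and the stability assumption forces $u^{k-1}$ to be a global minimizer of $\Jcal$ over $u^{k-1} + V_k$.

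The remaining and, I expect, hardest step is to identify all $u^k$ with $u$; this is where the theorem's uniqueness-of-block-minimizer hypothesis enters in an essential way. Since $w^{\nu_j,k} - w^{\nu_j,k-1} \in V_k$ and $V_k$ is closed, $u^k - u^{k-1} \in V_k$; together with $\Jcal(u^k) = \Jcal(u^{k-1})$, this makes $u^k$ another minimizer of $\Jcal$ on $u^{k-1} + V_k$. Uniqueness therefore gives $u^k = u^{k-1}$, so that $u = u^0 = u^1 = \dots = u^m$ and the per-block optimality at each $u^{k-1}$ transfers to $u$, yielding \eqref{eq:stationary_point}. Without the uniqueness assumption, the smoother could in principle slide along a level set of $\Jcal$, the limits $u^k$ could differ, and the per-block optimality established at the distinct $u^{k-1}$ could not be pulled back to the single accumulation point $u$.
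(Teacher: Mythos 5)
Your proposal is correct and follows essentially the same route as the paper's proof: bound the intermediate iterates $w^{\nu,k}$, extract subsubsequence limits $u^k$, use monotonicity together with continuity of $\Jcal\circ\ContinuousArgmin_k$ and stability to get per-block optimality of each $u^{k-1}$, and invoke the uniqueness of block minimizers plus $u^k-u^{k-1}\in V_k$ to collapse all limits to $u$. The only (cosmetic) difference is that you pass to the common energy limit $J^*$ and squeeze along the full sequence, whereas the paper takes limits in a chained inequality linking consecutive subsequence indices; both arguments are equivalent.
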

The proof uses the following direct consequence of lower semi-continuity.
\begin{lemma}
    \label{lemma:closed_sublevelset}
    If $F: \R^n \to \Rinfty$ is lower semi-continuous,
    then the sub-level set $F^{-1}((-\infty,C])$
    is closed for any $C\in \R$. In particular, this guarantees that $\lim x^\nu \in \op{dom} F$
    for any convergent sequence $x^\nu$ with bounded $F(x^\nu)$.
\end{lemma}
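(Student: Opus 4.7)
The plan is to prove the closedness of sub-level sets directly from the sequential characterization of lower semi-continuity, and then derive the second assertion as an immediate corollary. Since we are in $\R^n$, closedness of a set is equivalent to sequential closedness, so it suffices to show that the limit of any convergent sequence drawn from $F^{-1}((-\infty, C])$ again lies in that set.

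First I would fix $C \in \R$ and let $(x^\nu)$ be a convergent sequence in $F^{-1}((-\infty, C])$ with limit $x \in \R^n$. By the sequential form of lower semi-continuity for functions with values in $\Rinfty$, we have $F(x) \leq \liminf_{\nu \to \infty} F(x^\nu)$. Since $F(x^\nu) \leq C$ for every $\nu$, the $\liminf$ is bounded above by $C$, and so $F(x) \leq C$, i.e., $x \in F^{-1}((-\infty, C])$. This proves that the sub-level set is closed.

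For the second claim, suppose $(x^\nu)$ is convergent and $F(x^\nu)$ is bounded, say $F(x^\nu) \leq C$ for some $C \in \R$ and all $\nu$. Then $(x^\nu) \subset F^{-1}((-\infty, C])$, and by what we just proved, the limit $x \colonequals \lim_\nu x^\nu$ satisfies $F(x) \leq C < \infty$, hence $x \in \op{dom} F$.

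There is no real obstacle here; the only subtlety is to work with the $\Rinfty$-valued version of lower semi-continuity (where $F(x^\nu) = \infty$ is allowed in principle but ruled out by membership in the sub-level set), and to note that closedness in $\R^n$ coincides with sequential closedness so that the argument is complete without any topological refinement.
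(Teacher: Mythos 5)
Your proof is correct and is exactly the standard argument the paper has in mind; the paper in fact states this lemma without proof, calling it a ``direct consequence of lower semi-continuity,'' and your sequential argument ($F(x) \leq \liminf_\nu F(x^\nu) \leq C$, plus the equivalence of closedness and sequential closedness in $\R^n$) supplies precisely that consequence. Nothing is missing.
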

\begin{proof}[Proof of Theorem~\ref{thm:gs_stationary}]
    Throughout the proof we assume that the sequences $w^{\nu,k}$ are defined according to
    \eqref{eq:local_iterates}.
    Now let $(u^{\nu_l})$ be any convergent subsequence of $(u^\nu)$ with $u^{\nu_l} \to u$.
    Then by Lemma~\ref{lemma:closed_sublevelset} we have $u \in \op{dom} \Jcal$.
    We will show that $u$ is stationary in the sense of \eqref{eq:stationary_point}.

    As the subsequences $(w^{\nu_l,k})_{l \in \N}$ are also bounded for any $k$ there are
    subsubsequences (w.l.o.g.\ also indexed by $\nu_l$) and limits $w^k$ such that
    $w^{\nu_l,k} \to w^k$ for $l\to \infty$ for all $k=1,\dots,m$.
    In particular, we
    have $w^{\nu_m,0} = u^{\nu_m} \to u \equalscolon w^0$.
    For any $k=1,\dots,m$ we then have by monotonicity
    of $\Jcal$ with respect to $\InexactArgmin_k$, $\ContinuousArgmin_k$, and $\Ccal$
    \begin{align*}
        \Jcal(w^{\nu_{l+1},k-1})
            \leq \Jcal(w^{\nu_l,k})
            \leq \Jcal(\ContinuousArgmin_{k}(w^{\nu_l,k-1}))
            \leq \Jcal(w^{\nu_l,k-1}) \leq \Jcal(u^0)<\infty,
    \end{align*}
    and thus $w^k \in \op{dom}\Jcal$ by Lemma~\ref{lemma:closed_sublevelset}.
    Taking the limit $l \to \infty$ and using continuity of $\Jcal$ and $\Jcal \circ \ContinuousArgmin_{k}$ we get
    \begin{align}
        \label{eq:intermediate_iterate_no_descent}
        \Jcal(w^{k-1})
        \leq \Jcal(w^k)
            \leq \Jcal(\ContinuousArgmin_{k}(w^{k-1}))
            \leq \Jcal(w^{k-1}),
    \end{align}
    which, together with stability, implies that $\ContinuousArgmin_k(w^{k-1})$ is the unique
    minimizer of $\Jcal$ in $w^{k-1}+V_k$ and thus $\ContinuousArgmin_k(w^{k-1}) = w^{k-1}$.
    From $w^{\nu_l,k}-w^{\nu_l,k-1} \in V_k$ we get for the limit $w^k-w^{k-1} \in V_k$
    and thus $w^k \in w^{k-1} + V_k$.
    Hence $\Jcal(w^k) \leq \Jcal(w^{k-1})$ (as shown in \eqref{eq:intermediate_iterate_no_descent})
    and uniqueness of the minimizer in $w^{k-1} + V_k$ show $w^k=w^{k-1}$.

    We conclude that we have $w^m = w^{m-1} = \dots = w^0=u$, and thus
    $\ContinuousArgmin_k(u) = u$ for $k=1,\dots,m$.
    Hence we get \eqref{eq:stationary_point} from the stability assumption~(\ref{enum:stability}).
\end{proof}

Under the additional assumption of a compatible subspace decomposition
we can show convergence to minimizers of $\Jcal$.

\begin{corollary}
    \label{cor:gs_accumulation_points}
    Assume that, additionally to the assumptions of Theorem~\ref{thm:gs_stationary},
    the subspace decomposition $\R^n = \sum_{k=1}^m V_k$ is compatible with $\Jcal$.
    Then any accumulation point of $u^\nu$ is a global minimizer of $\Jcal$.
\end{corollary}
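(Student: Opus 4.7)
The plan is to combine the conclusion of Theorem~\ref{thm:gs_stationary} directly with the definition of compatibility. By Theorem~\ref{thm:gs_stationary}, every accumulation point $u$ of the sequence $(u^\nu)$ satisfies the subspace optimality condition
\begin{align*}
    \Jcal(u) \leq \Jcal(u+v) \qquad \forall v \in V_k, \quad k=1,\dots,m.
\end{align*}
This is precisely the hypothesis in the definition of compatibility of the decomposition \eqref{eq:subspace_splitting} with $\Jcal$.

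Since we are now assuming that the decomposition $\R^n = \sum_{k=1}^m V_k$ is compatible with $\Jcal$, the defining implication yields
\begin{align*}
    \Jcal(u) \leq \Jcal(u+v) \qquad \forall v \in \R^n,
\end{align*}
i.e., $u$ is a global minimizer of $\Jcal$. This is all that is needed; no further work is required.

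There is no real obstacle here — the corollary is essentially a one-line consequence of Theorem~\ref{thm:gs_stationary} combined with the definition of compatibility. The substantive content lies in Theorem~\ref{thm:gs_stationary} itself (which gives subspace stationarity) and in Lemma~\ref{lem:block_separable_compatible} (which provides a sufficient condition for compatibility in the block-separable setting). The corollary merely glues these two together.
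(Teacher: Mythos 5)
Your argument is correct and is exactly the intended reasoning: the paper itself gives no separate proof of this corollary, treating it as the immediate combination of Theorem~\ref{thm:gs_stationary} (accumulation points satisfy the subspace optimality condition \eqref{eq:stationary_point}) with the definition of compatibility. Nothing is missing.
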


\begin{corollary}
    \label{cor:gs_convergence}
    Assume that, additionally to the assumptions of Corollary~\ref{cor:gs_accumulation_points},
    $\Jcal$ has a unique global minimizer $u^*$, then $u^\nu$ converges to $u^*$.
\end{corollary}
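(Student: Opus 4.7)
The plan proceeds in three short steps, each essentially routine once the earlier results are in hand. First I would establish that the sequence $(u^\nu)$ is bounded. The monotonicity assumption of Theorem~\ref{thm:gs_stationary} (both for the local inexact minimizers and for the correction operator $\Ccal$) yields $\Jcal(u^\nu) \leq \Jcal(u^0) < \infty$ for every $\nu$, so the entire sequence lies in the sub-level set $\Jcal^{-1}((-\infty,\Jcal(u^0)])$. By Lemma~\ref{lemma:closed_sublevelset} this set is closed, and by coercivity of $\Jcal$ it is bounded; in $\R^n$ it is therefore compact.

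Second, I would invoke Corollary~\ref{cor:gs_accumulation_points}: every accumulation point of $(u^\nu)$ is a global minimizer of $\Jcal$. Since $\Jcal$ has a unique global minimizer $u^*$ by assumption, it follows that $u^*$ is the only possible accumulation point of $(u^\nu)$.

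Third, I would deduce convergence $u^\nu \to u^*$ from the standard fact that a bounded sequence in $\R^n$ with a unique accumulation point converges to it. This step I would spell out by contradiction: if $u^\nu \not\to u^*$, then there is some $\varepsilon > 0$ and a subsequence $(u^{\nu_l})$ with $\|u^{\nu_l} - u^*\| \geq \varepsilon$ for all $l$. This subsequence is still contained in the compact sub-level set from step one, so by Bolzano--Weierstrass it has a convergent sub-subsequence whose limit $\tilde u$ satisfies $\|\tilde u - u^*\| \geq \varepsilon$. But $\tilde u$ is then an accumulation point of $(u^\nu)$ different from $u^*$, contradicting the conclusion of step two.

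No step here looks like a serious obstacle; the argument is essentially a packaging of compactness, uniqueness of the minimizer, and the standard characterization of convergence via accumulation points, with the only slightly non-trivial ingredient being the combination of coercivity and lower semi-continuity (via Lemma~\ref{lemma:closed_sublevelset}) to obtain compactness of the relevant sub-level set.
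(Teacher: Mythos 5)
Your proposal is correct and follows essentially the same route as the paper's own proof: boundedness of $(u^\nu)$ from coercivity together with the monotone decrease of $\Jcal(u^\nu)$, and then the observation that a non-convergent bounded sequence would have an accumulation point different from $u^*$, contradicting Corollary~\ref{cor:gs_accumulation_points}. The only difference is that you spell out the compactness of the sub-level set and the Bolzano--Weierstrass extraction explicitly, which the paper leaves implicit.
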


\begin{proof}
    Since $\Jcal$ is coercive and $\Jcal(u^\nu)$ is monotonically decreasing,
    the sequence $u^\nu$ is bounded.
    Hence, if $u^\nu$ does not converge to $u^*$ it must have an accumulation point
    $u \neq u^*$, which contradicts Corollary~\ref{cor:gs_accumulation_points}.
\end{proof}

These convergence results cover many important cases; in particular, they imply global convergence
to a minimizer for all example functionals listed in Chapter~\ref{sec:block_separable_functionals}.
For reference we state global convergence for strictly convex, block-separable nonsmooth functionals
as a separate corollary.

\begin{corollary}
 Let $\Jcal : \R^n \to \R \cup \{ \infty \}$ be strictly convex, coercive, proper, lower semi-continuous,
 and continuous on its domain.  Assume that $\Jcal$ is block-separable nonsmooth, and consider the
 induced subspaces $V_k \colonequals (\op{ker} R_k)^\perp$.
 Then the TNNMG method with smoother
 $\InexactArgmin = \InexactArgmin_m \circ \dots \circ \InexactArgmin_1$
 and linear correction $\Ccal$ satisfying the \emph{monotonicity}, \emph{continuity}, and \emph{stability}
 assumptions of Theorem~\ref{thm:gs_stationary} converges globally to the unique minimizer of $\Jcal$.
\end{corollary}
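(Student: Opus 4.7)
The plan is to reduce the statement to Corollary~\ref{cor:gs_convergence} by verifying, one by one, that all its hypotheses are met under the assumptions made here. The hypotheses of Corollary~\ref{cor:gs_convergence} chain back through Corollary~\ref{cor:gs_accumulation_points} to Theorem~\ref{thm:gs_stationary}, so what I really need to check are: (a) the structural assumptions on $\Jcal$ in Theorem~\ref{thm:gs_stationary}, (b) uniqueness of a block-wise minimizer along each $V_k$, (c) compatibility of the subspace decomposition with $\Jcal$, and (d) uniqueness of the global minimizer.

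Items (a) and the three assumptions on $\InexactArgmin_k$, $\ContinuousArgmin_k$, and $\Ccal$ appear directly in the corollary's hypotheses (coercivity, properness, lower semi-continuity, continuity on the domain, monotonicity, continuity, and stability). For (b) and (d) I would invoke strict convexity: a strictly convex functional can have at most one minimizer on any convex set, so in particular $\Jcal$ has a unique global minimizer $u^*$, and for each $w \in \op{dom}\Jcal$ and each $V_k$ the functional $\Jcal(w + \cdot)$ restricted to the convex set $V_k$ (or rather to $V_k \cap (\op{dom}\Jcal - w)$) attains its infimum at a unique point; coercivity together with lower semi-continuity on the closed affine subspace $w + V_k$ ensures the infimum is attained, and strict convexity ensures uniqueness.

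For item (c) I would apply Lemma~\ref{lem:block_separable_compatible} with the trivial choice $k' = k$ and $V_{k'} = V_k = (\op{ker} R_k)^\perp$: the condition $(\op{ker} R_k)^\perp \subset V_{k'}$ is satisfied by equality, and the lemma's convexity hypothesis on $\Jcal$ holds a fortiori under strict convexity. This yields that the decomposition $\R^n = \sum_{k=1}^m V_k$, which in this setting is actually the direct sum \eqref{eq:direct_sum}, is compatible with $\Jcal$.

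Having assembled all the ingredients, the chain Theorem~\ref{thm:gs_stationary} $\Rightarrow$ Corollary~\ref{cor:gs_accumulation_points} $\Rightarrow$ Corollary~\ref{cor:gs_convergence} delivers global convergence $u^\nu \to u^*$. I do not expect a real obstacle here; the only substantive thing to argue carefully is the unique-block-minimizer condition, since strict convexity gives uniqueness but one must still cite coercivity and lower semi-continuity to guarantee existence of the block-wise minimizer on each affine subspace $w + V_k$ before invoking uniqueness.
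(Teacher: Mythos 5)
Your proposal is correct and follows exactly the route the paper intends: the corollary is stated without a separate proof precisely because it is the immediate combination of Lemma~\ref{lem:block_separable_compatible} (compatibility of the induced decomposition), Theorem~\ref{thm:gs_stationary}, and Corollaries~\ref{cor:gs_accumulation_points} and~\ref{cor:gs_convergence}, with strict convexity supplying uniqueness of both the block-wise and the global minimizers and coercivity plus lower semi-continuity supplying existence. Your care in verifying the unique-block-minimizer hypothesis is exactly the right point to be explicit about, and nothing is missing.
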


This corollary subsumes previous results
from~\cite{GraeserSander2014,KornhuberKrause2006,GraeserSackSander2009} and others.
However, the theory covers many more general situations as well, such as the biconvex energies
in~\cite{miehe_welschinger_hofacker_2010}, and certain quasi-convex functionals.

\section{Solution of local subproblems}
\label{sec:smoothers}

In the following we will give several examples of local operators
$\InexactArgmin_k$ that satisfy the assumptions of Theorem~\ref{thm:gs_stationary}.%
\footnote{Note that ``satisfy the assumptions'' generally means that there exists
a bounding minimization operator $\ContinuousArgmin_k$ with the necessary properties.
In some cases this operator coincides with $\InexactArgmin_k$ itself.}
Evaluating such an operator is equivalent to the inexact minimization
\begin{align}
\label{eq:local_min_problem_values}
 \argmin_{v \in w^{\nu,k-1} + V_k} \Jcal(v)
\end{align}
in the $k$-th affine subspace $w^{\nu,k-1} + V_k$.
These evaluations appear as local subproblems
in Line~\ref{enum:gs_local_min} of the TNNMG algorithm,
and form the nonlinear Gauß--Seidel smoother.
In the following we only consider a single subspace $V \colonequals V_k$.
Introducing $N = n_k$, and
dropping all other indices we will write \eqref{eq:local_min_problem_values} as
\begin{align}
    \label{eq:local_subproblem}
 \argmin_{v \in w + V} \Jcal(v)
\end{align}
for a given $w \in \R^n$.

\subsection{Exact minimization}

The trivial choice for the local correction operator $\InexactArgmin$
is the exact minimization operator $\ExactArgmin$ given by
\begin{align}
\label{eq:exact_minimization}
    \ExactArgmin(w) \colonequals \argmin_{v \in w +V} \Jcal(v).
\end{align}
In the case where the minimizer is not unique, we assume that
$\ExactArgmin(w)$ is any of the global minimizers in the affine
subspace $w + V$. This is sufficient for the application of Theorem~\ref{thm:gs_stationary}, because all assumptions
there are stated in terms of $\Jcal \circ \InexactArgmin$,
which is invariant under the specific choice.

\begin{lemma}\label{lem:exact_min_continuous}
    Let $\Jcal$ be block-separable nonsmooth,
    and let the decomposition be induced by the block-structure,
    i.e., \eqref{eq:decomposition_by_blocks}.
    Then the exact minimization operator $\InexactArgmin = \ExactArgmin$ satisfies the assumptions
    of Theorem~\ref{thm:gs_stationary} with $\ContinuousArgmin = \ExactArgmin$.  In particular,
    the minimal energy $\Jcal \circ \ExactArgmin$ is continuous
    on $\op{dom} \Jcal$.
\end{lemma}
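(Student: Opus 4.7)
The monotonicity and stability conditions are immediate from the definition of $\ExactArgmin$: since $w \in w + V$, one has $\Jcal(\ExactArgmin(w)) = \min_{v \in w + V} \Jcal(v) \le \Jcal(w)$, and strict inequality holds precisely when $\Jcal(w)$ is not already minimal in $w + V$. So the lemma reduces to establishing continuity of the value function $g \colonequals \Jcal \circ \ExactArgmin$ on $\op{dom}\Jcal$, which I would split into lower and upper semi-continuity. (Existence of a minimizer in each affine slice follows from coercivity, lower semi-continuity of $\Jcal$, and continuity of $\Jcal$ on its domain by the direct method, so $\ExactArgmin$ is well-defined.)

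\emph{Lower semi-continuity} of $g$ is a direct-method argument. Given $w_\nu \to w$ in $\op{dom}\Jcal$, monotonicity and coercivity of $\Jcal$ force the minimizers $w_\nu^* \colonequals \ExactArgmin(w_\nu)$ to lie in a bounded set. I would extract a convergent subsequence $w_{\nu_l}^* \to w^\dagger$; closedness of $V$ applied to the differences $w_{\nu_l}^* - w_{\nu_l} \in V$ gives $w^\dagger \in w + V$, and lower semi-continuity of $\Jcal$ yields
\begin{align*}
    g(w) \le \Jcal(w^\dagger) \le \liminf_{l\to\infty} \Jcal(w_{\nu_l}^*) = \liminf_{l\to\infty} g(w_{\nu_l}),
\end{align*}
which upgrades to $g(w) \le \liminf_\nu g(w_\nu)$ by the usual subsequence argument.

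\emph{Upper semi-continuity} is where the block structure is essential, and is the main obstacle. Writing $w^* \colonequals \ExactArgmin(w)$, the plan is to construct for each $w_\nu \to w$ a feasible competitor $\tilde w_\nu \in w_\nu + V$ with $\Jcal(\tilde w_\nu) \to \Jcal(w^*)$; this then gives $\limsup g(w_\nu) \le \lim \Jcal(\tilde w_\nu) = g(w)$. The decisive structural fact is that for the decomposition induced by the block structure, $V = V_k = (\op{ker} R_k)^\perp$ and $R_k|_{V_k}$ is an isomorphism onto $\R^{N_k}$, while (as used in the proof of Lemma~\ref{lem:block_separable_compatible}) $R_i$ annihilates $V_k$ for $i \ne k$. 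With this at hand I would pick
\begin{align*}
    \tilde w_\nu \colonequals w_\nu + (w^* - w) + (R_k|_{V_k})^{-1}\bigl(R_k(w - w_\nu)\bigr) \in w_\nu + V_k,
\end{align*}
which is tailored so that $R_k \tilde w_\nu = R_k w^*$ exactly, while $R_i \tilde w_\nu = R_i w_\nu$ for $i \ne k$. Consequently $\varphi_k(R_k \tilde w_\nu) = \varphi_k(R_k w^*)$ is a finite constant in $\nu$, each $\varphi_i(R_i \tilde w_\nu) = \varphi_i(R_i w_\nu)$ is finite and converges to $\varphi_i(R_i w)$ by continuity of $\varphi_i$ on its domain, and $\Jcal_0(\tilde w_\nu) \to \Jcal_0(w^*)$ by continuity of $\Jcal_0$; summing gives $\tilde w_\nu \in \op{dom}\Jcal$ eventually and $\Jcal(\tilde w_\nu) \to \Jcal(w^*) = g(w)$. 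The obstacle is genuine: without the block structure, perturbing $w$ could push the optimal competitor out of $\op{dom}\Jcal$, which is not assumed open, and the value function of a merely lower semi-continuous functional generally fails to be upper semi-continuous there. It is precisely the decoupling supplied by $V_k = (\op{ker} R_k)^\perp$ that lets us reset the single ``active'' coordinate $R_k$ to its target value without disturbing the finiteness of the remaining $\varphi_i$.
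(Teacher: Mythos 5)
Your proposal is correct, but it is organized differently from the paper's proof. The paper disposes of monotonicity and stability ``by construction'' exactly as you do, and then obtains continuity of $\Jcal\circ\ExactArgmin$ in one stroke: block-separability gives $\op{dom}\Jcal$ the product form $\op{dom}\varphi_k\times\prod_{i\neq k}\op{dom}\varphi_i$, and continuity of the value function then follows from Corollary~\ref{cor:product_domain_continuity}, i.e.\ from the general result (Theorem~\ref{thm:min_continuity}) that parametrized minimization over $x+V$ is continuous when the domain is stable under translations in $V$. You instead prove continuity directly, splitting it into lower semicontinuity (direct method: boundedness of minimizers via monotonicity and coercivity, subsequence extraction, closedness of $V$, lsc of $\Jcal$) and upper semicontinuity via an explicit feasible competitor $\tilde w_\nu\in w_\nu+V_k$ obtained by resetting the $k$-th block to $R_kw^*$ while leaving the other blocks at $R_iw_\nu$. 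This competitor is in substance the same sequence that the paper's Corollary~\ref{cor:product_domain_continuity} produces in its stability proof ($v^\nu=(x_1+v_1-x_1^\nu,0)$ in the $R$-coordinates), and your lsc half mirrors the subsequence step inside Theorem~\ref{thm:min_continuity}; so the key structural insight is identical, but your write-up is self-contained and avoids the appendix machinery, at the cost of redoing in this special case what the paper proves once in greater generality (and reuses for Lemma~\ref{lem:polyhedron_exact_minimization} with polyhedral domains, where your explicit competitor would no longer be available). Two minor points, neither a gap relative to the paper: your use of $R_iV_k=\{0\}$ for $i\neq k$ is exactly the structural property the paper itself relies on (in the proof of Lemma~\ref{lem:block_separable_compatible} and implicitly in asserting the product form of $\op{dom}\Jcal$); and in the final limit you could bypass continuity of the individual $\varphi_i$ on their domains (a standing assumption from the introduction, though not repeated in Definition~\ref{def:block_separable}) by simply noting that $\tilde w_\nu\to w^*$ with all terms in $\op{dom}\Jcal$, so continuity of $\Jcal$ on its domain already gives $\Jcal(\tilde w_\nu)\to\Jcal(w^*)$.
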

\begin{proof}
    The operator $\Mcal = \tilde{\Mcal} = \ExactArgmin$ is stable and monotone by construction.
    To show continuity of $\Jcal \circ \ExactArgmin$, note that
    the block-separability of the functional
    \begin{align*}
        \Jcal(v) = \Jcal_0(v) + \sum_{k=1}^M \varphi_k (R_k v)
    \end{align*}
    implies that its domain takes the form
    \begin{align*}
        \op{dom} \Jcal = \op{dom} \varphi_k \times \Bigl(\prod_{i\neq k} \op{dom} \varphi_i\Bigr).
    \end{align*}
    The assertion then follows from Corollary~\ref{cor:product_domain_continuity}.
\end{proof}

If the subspace decomposition is not induced
by the block-separable structure of $\Jcal$, then the minimization \eqref{eq:exact_minimization} is
in general not continuous.
We refer to \cite{GraeserSander2014} for an example.
The following result shows that continuity still holds if the domain is polyhedral.
It is a generalization of a result
shown in \cite{GraeserSander2014} for the case of a convex~$\Jcal$.

\begin{lemma}
    \label{lem:polyhedron_exact_minimization}
    Let $\op{dom} \Jcal$ be a convex polyhedron.
    Then $\InexactArgmin = \ContinuousArgmin = \ExactArgmin$ satisfies the assumptions
    of Theorem~\ref{thm:gs_stationary}.
\end{lemma}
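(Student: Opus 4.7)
The plan is to verify the three assumptions on $\InexactArgmin_k$ in Theorem~\ref{thm:gs_stationary} for $\InexactArgmin = \ContinuousArgmin = \ExactArgmin$. Monotonicity $\Jcal(\ExactArgmin(v)) \leq \Jcal(v)$ and stability $\Jcal(\ExactArgmin(v)) < \Jcal(v)$ whenever $\Jcal(v)$ is not minimal in $v+V$ are immediate from the definition of $\ExactArgmin$ as any global minimizer on $v+V$, and existence of such a minimizer follows from coercivity and lower semi-continuity of $\Jcal$ together with closedness of the non-empty set $(w+V) \cap \op{dom}\Jcal$ (closed because $\op{dom}\Jcal$ is a polyhedron, hence closed). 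The whole content of the lemma therefore lies in continuity of the value function $f(w) \colonequals \Jcal(\ExactArgmin(w))$ on $\op{dom}\Jcal$, which I would establish by showing lower and upper semi-continuity of $f$ at an arbitrary $w \in \op{dom}\Jcal$ separately, along an arbitrary sequence $w^l \to w$ in $\op{dom}\Jcal$.

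Lower semi-continuity $f(w) \leq \liminf_l f(w^l)$ is standard and uses only coercivity, lower semi-continuity of $\Jcal$, and closedness of the polyhedron: pick exact minimizers $u^l \in (w^l+V) \cap \op{dom}\Jcal$ realizing $f(w^l)$, use coercivity to extract a convergent subsequence $u^l \to u$, note that $u \in w+V$ because $V$ is closed and $u \in \op{dom}\Jcal$ because the polyhedron is closed, and invoke lower semi-continuity of $\Jcal$ to conclude $f(w) \leq \Jcal(u) \leq \liminf \Jcal(u^l) = \liminf f(w^l)$.

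The main obstacle is upper semi-continuity, and this is the single step that genuinely uses polyhedrality of $\op{dom}\Jcal$. For a fixed exact minimizer $u = \ExactArgmin(w)$ I need an approximating sequence $u^l \in (w^l+V) \cap \op{dom}\Jcal$ with $u^l \to u$; continuity of $\Jcal$ on its domain then yields $\limsup f(w^l) \leq \lim \Jcal(u^l) = \Jcal(u) = f(w)$. The required inner semi-continuity of the set-valued map $w \mapsto (w+V) \cap \op{dom}\Jcal$ is precisely supplied by Hoffman's lemma for linear inequality systems. Writing $\op{dom}\Jcal = \{x \in \R^n : Ax \leq b\}$ and parametrizing $V$ by the prolongation $P$ from~\eqref{eq:prolongation_operators}, the set $(w+V) \cap \op{dom}\Jcal$ is the image under $\xi \mapsto w + P\xi$ of the polyhedron $Q(w) \colonequals \{\xi \in \R^N : (AP)\xi \leq b - Aw\}$, which is non-empty for every $w \in \op{dom}\Jcal$ since $\xi = 0$ lies in it. If $\xi^*$ represents $u - w$, then $(AP)\xi^* \leq b - Aw$, so $((AP)\xi^* - (b - Aw^l))_+ \leq A(w^l - w)$ componentwise, and Hoffman's lemma supplies a constant $C$ depending only on $AP$ with $\op{dist}(\xi^*, Q(w^l)) \leq C\|A(w^l-w)\| \to 0$. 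Choosing any $\xi^l \in Q(w^l)$ with $\|\xi^l - \xi^*\| \leq 2\op{dist}(\xi^*, Q(w^l))$ and setting $u^l \colonequals w^l + P\xi^l$ produces the desired approximating sequence, closing the argument and accounting for the generalization beyond the convex case treated in~\cite{GraeserSander2014}.
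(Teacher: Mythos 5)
Your proof is correct for closed polyhedra, and it takes a genuinely different route from the paper. The paper dispatches monotonicity and stability in one line, exactly as you do, and gets continuity of $\Jcal\circ\ExactArgmin$ from an abstract result: it introduces the notion of a domain that is stable under translations in a family of subspaces (Definition~\ref{def:stable_K_V}), proves a general continuity theorem for parametric minimization under that hypothesis (Theorem~\ref{thm:min_continuity}), and then verifies the stability property for convex polyhedra by an elementary scaling construction -- given $x^\nu\to x$ in $K$ and $v\in V$ with $x+v\in K$, it produces factors $\lambda^\nu\in[0,1]$, computed constraint by constraint, with $x^\nu+\lambda^\nu v\in K$ and $\lambda^\nu\to 1$ (Corollary~\ref{cor:polyhedral_domain_continuity}). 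You instead prove lower and upper semicontinuity of the value function directly and obtain inner semicontinuity of the feasible-set map $w\mapsto(w+V)\cap\op{dom}\Jcal$ from Hoffman's error bound for the system $(AP)\xi\le b-Aw$, with a constant depending only on $AP$. That is a sound alternative, and it even buys a quantitative Lipschitz-type stability estimate that the paper's purely qualitative argument does not provide.

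The one substantive difference is scope: you assume the polyhedron is closed (``polyhedron, hence closed''), whereas the paper's Corollary~\ref{cor:polyhedral_domain_continuity} explicitly allows \emph{not necessarily closed} polyhedra, i.e.\ some defining inequalities strict, in line with the standing assumption that $\op{dom}\Jcal$ is convex but not necessarily closed. Your minor uses of closedness are easily repaired: existence of $\ExactArgmin(w)$ needs only coercivity and lower semicontinuity of $\Jcal$ on the closed affine set $w+V$, and membership of limit points in $\op{dom}\Jcal$ follows from Lemma~\ref{lemma:closed_sublevelset}. The Hoffman step, however, genuinely uses the closed representation $\op{dom}\Jcal=\{x\st Ax\le b\}$: for a half-open polyhedron the near-optimal point of $Q(w^l)$ that you select may sit on a facet excluded by a strict inequality, so the approximating sequence $u^l$ need not lie in $\op{dom}\Jcal$ and would require an extra correction (e.g.\ mixing with a point where the strict constraints hold, in the spirit of the paper's $\lambda^\nu$-scaling). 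As written, your argument therefore proves the lemma for closed polyhedral domains, while the paper's argument covers the general case needed by its framework.
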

\begin{proof}
    Monotonicity and stability are again given by construction,
    while continuity for polyhedral domains is shown in
    Corollary~\ref{cor:polyhedral_domain_continuity}.
\end{proof}

We can also show that inexact versions of $\ExactArgmin$
satisfy these assumptions if they guarantee sufficient descent.

\begin{lemma}
    \label{lem:polyhedron_inexact_minimization}
    Let the functional $\Jcal$ and the subspace decomposition
    satisfy either the assumptions of
    Lemma~\ref{lem:exact_min_continuous}
    or of Lemma~\ref{lem:polyhedron_exact_minimization}.
    Assume that the operator
    \begin{align*}
        \InexactArgmin:\op{dom}\Jcal \to \op{dom} \Jcal, \qquad
        \InexactArgmin - \Id : \op{dom}\Jcal \to V
    \end{align*}
    satisfies the sufficient descent condition
    \begin{align*}
        \Jcal(w)-\Jcal(\InexactArgmin(w)) \geq \varepsilon \bigl[\Jcal(w)-\Jcal(\ExactArgmin(w)) \bigr]
    \end{align*}
    for a fixed $\varepsilon > 0$.
    Then $\InexactArgmin$ satisfies the assumptions of Theorem~\ref{thm:gs_stationary}.
\end{lemma}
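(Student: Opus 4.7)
The plan is to construct a dominating operator $\ContinuousArgmin$ whose energy at each $w$ equals the explicit convex combination $(1-\varepsilon)\Jcal(w)+\varepsilon\Jcal(\ExactArgmin(w))$, and then to reduce the three hypotheses of Theorem~\ref{thm:gs_stationary} for $\InexactArgmin$ to the properties of $\ExactArgmin$ already established in Lemma~\ref{lem:exact_min_continuous} or Lemma~\ref{lem:polyhedron_exact_minimization}. Without loss of generality I may replace $\varepsilon$ by $\min(\varepsilon,1)$, since the sufficient descent condition weakens monotonically in $\varepsilon$; so I assume $\varepsilon\in(0,1]$ throughout.

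For each $w\in\op{dom}\Jcal$ I consider the segment $\gamma_w(t)\colonequals(1-t)w+t\ExactArgmin(w)$ for $t\in[0,1]$. Convexity of $\op{dom}\Jcal$ places the whole segment inside $\op{dom}\Jcal$, and continuity of $\Jcal$ on its domain makes $t\mapsto\Jcal(\gamma_w(t))$ continuous with endpoint values $\Jcal(w)$ and $\Jcal(\ExactArgmin(w))$. Since $\Jcal(\ExactArgmin(w))\leq(1-\varepsilon)\Jcal(w)+\varepsilon\Jcal(\ExactArgmin(w))\leq\Jcal(w)$, the intermediate value theorem yields some $t_w^\ast\in[0,1]$ realizing this target energy. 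Setting $\ContinuousArgmin(w)\colonequals\gamma_w(t_w^\ast)$, choosing any such $t_w^\ast$ if it is not unique, gives an operator from $\op{dom}\Jcal$ into itself with $\ContinuousArgmin(w)-w=t_w^\ast(\ExactArgmin(w)-w)\in V$, so it is of the required form.

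The three assumptions then follow directly. Monotonicity $\Jcal(\InexactArgmin(w))\leq\Jcal(\ContinuousArgmin(w))\leq\Jcal(w)$ is precisely the sufficient descent hypothesis on the left and the inequality $\Jcal(\ExactArgmin(w))\leq\Jcal(w)$ on the right. Continuity of $\Jcal\circ\ContinuousArgmin$ is immediate from the pointwise identity
\[
 \Jcal\circ\ContinuousArgmin = (1-\varepsilon)\,\Jcal + \varepsilon\,(\Jcal\circ\ExactArgmin),
\]
since $\Jcal$ is continuous on $\op{dom}\Jcal$ by assumption and $\Jcal\circ\ExactArgmin$ is continuous by Lemma~\ref{lem:exact_min_continuous} or Lemma~\ref{lem:polyhedron_exact_minimization}. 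Stability follows from the same identity: if $w$ is not minimal in $w+V$, then $\Jcal(\ExactArgmin(w))<\Jcal(w)$, and with $\varepsilon>0$ this strict inequality survives the convex combination, giving $\Jcal(\ContinuousArgmin(w))<\Jcal(w)$.

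The main conceptual point, and really the only subtle one, is that the operator $\ContinuousArgmin$ cannot in general be arranged to be continuous as a map into $\R^n$, and that the theorem only requires continuity of the scalar composition $\Jcal\circ\ContinuousArgmin$. The construction above is designed to exploit exactly this flexibility: the geometric selection $t_w^\ast$ is arbitrary and potentially discontinuous, but by design the realized energy $(1-\varepsilon)\Jcal(w)+\varepsilon\Jcal(\ExactArgmin(w))$ depends continuously on $w$. Once this decoupling between the location of $\ContinuousArgmin(w)$ and its energy is in hand, the remainder is bookkeeping, with the only nontrivial input being the continuity of $\Jcal\circ\ExactArgmin$ supplied by the two preceding lemmas.
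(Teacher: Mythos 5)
Your proposal is correct and follows essentially the same route as the paper: define $\ContinuousArgmin(w)$ as a point of $\op{dom}\Jcal \cap (w+V)$ whose energy is exactly the convex combination $(1-\varepsilon)\Jcal(w)+\varepsilon\Jcal(\ExactArgmin(w))$, then read off monotonicity, continuity of $\Jcal\circ\ContinuousArgmin$ from the continuity of $\Jcal$ and $\Jcal\circ\ExactArgmin$, and stability. Your segment/intermediate-value construction (and the harmless normalization $\varepsilon\le 1$) merely makes explicit the existence of $\tilde w$ that the paper asserts directly from continuity of $\Jcal$ on its convex domain.
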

\begin{proof}
    By continuity of $\Jcal$ on $\op{dom} \Jcal$, for any $w\in \op{dom} \Jcal$ there exists
    a $\tilde{w} \in \op{dom} \Jcal \cap w + V$ with
    \begin{align*}
        \Jcal(\tilde{w}) = (1-\varepsilon) \Jcal(w) + \varepsilon \Jcal(\ExactArgmin(w)).
    \end{align*}
    Now we set $\ContinuousArgmin(w) = \tilde{w}$. Then we have
    \begin{align*}
        \Jcal(\InexactArgmin(w)) \leq (1-\varepsilon)\Jcal(w) + \varepsilon \Jcal(\ExactArgmin(w)) = \Jcal(\ContinuousArgmin(w)) \leq \Jcal(w),
    \end{align*}
    which is the required monotonicity.
    Continuity of $\Jcal$ and of $\Jcal \circ \ExactArgmin$ (Lemma~\ref{lem:exact_min_continuous})
    imply continuity of $\Jcal \circ \ContinuousArgmin$.
    Furthermore $\Jcal(\ContinuousArgmin(w)) = \Jcal(w)$ implies $\Jcal(\ExactArgmin(w)) = \Jcal(w)$
    which shows stability.
\end{proof}

\subsection{Polyhedral Gau\ss--Seidel}
We now consider the case of a block-separable nonsmooth functional
\begin{align*}
    \Jcal(v) = \Jcal_0(v) + \sum_{k=1}^M \varphi_k (R_k v)
\end{align*}
where, additionally, each $\varphi_k$ is piecewise smooth on a partition
of its domain into a finite set a convex polyhedra.
Furthermore we assume the induced subspace decomposition
\eqref{eq:decomposition_by_blocks}.
Here, solving the piecewise smooth subproblems in
each subspace $V_k$ can be nontrivial problem.

For the case that $\op{dom} \varphi_k$ is a simplex, \cite{KornhuberKrause2006}
proposed to further split $V_k$ into the one-dimensional spaces spanned by the simplex edges
and to successively minimize in each of those spaces.
This was generalized in \cite{GraeserSander2014} to general polyhedral partitions,
by using spaces
\begin{align*}
    V_k = \sum_{i=1}^{m_k} V_{k,i},
\end{align*}
where the one-dimensional subspaces $V_{k,i}$
are aligned with the edges of the polyhedral
partition of $\op{dom} \varphi_k$.
For the precise definition of these subspaces and the
treatment of the case that the edges do not span the
whole space we refer to \cite{GraeserSander2014}.

Instead of showing for each $V_k$ that this procedure takes
the form of an inexact minimization operator $\InexactArgmin_k$
in $V_k$ in the sense of Theorem~\ref{thm:gs_stationary}, we can
simply apply this theorem to the decomposition
\begin{align}
\label{eq:pgs_decomposition}
    \R^n = \sum_{k=1}^m \sum_{i=1}^{m_k} V_{k,i}
\end{align}
and use an exact or inexact minimization step in each one-dimensional
subspace $V_{k,i}$.
Such inexact minimization operators
$\InexactArgmin_{k,i}$ for $V_{k,i}$ can be constructed
by straightforward application of bisection with
fully practical termination criteria, see, e.g., \cite{GraeserSander2014}.
The resulting relaxation method is called the
\emph{Polyhedral Gau\ss--Seidel} method.  A variant of the following theorem
has been the main result of~\cite{GraeserSander2014}.

\begin{theorem}
 Let $\Jcal : \R^n \to \R \cup \{ \infty \}$ be coercive, proper, lower-semicontinuous, and
 continuous on its domain.  Suppose that there exists a finite partition of $\op{dom} \varphi$
 into nondegenerate, convex, closed polyhedra such that $\varphi$ is piecewise smooth
 with respect to this partition.
 Consider the decomposition of $\R^n$ into one-dimensional subspaces $V_{k,i}$ such that
 any tangent cone of the partition is generated by vectors from the one-dimensional spaces $V_{k,i}$.

 On each such subspace, let $\Mcal_{k,i}$ be the exact minimization
 operator~\eqref{eq:exact_minimization} or its inexact cousin in the sense of
 Lemma~\ref{lem:polyhedron_inexact_minimization}.
 Then all accumulation points of the TNNMG method are stationary.  Additionally,
 if $\Jcal$ is convex and has a unique minimizer, then the method will converge to that minimizer.
\end{theorem}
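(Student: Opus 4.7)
The plan is to apply Theorem~\ref{thm:gs_stationary} to the fine decomposition $\R^n = \sum_{k} \sum_{i} V_{k,i}$ of~\eqref{eq:pgs_decomposition} with the one-dimensional local operators $\Mcal_{k,i}$. The two conclusions of the theorem then follow from, respectively, Theorem~\ref{thm:gs_stationary} itself and Corollary~\ref{cor:gs_convergence}, provided we establish the required structural properties of the decomposition.

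First I would verify the hypotheses of Theorem~\ref{thm:gs_stationary}. Since $\op{dom}\Jcal = \op{dom}\varphi$ is a finite union of closed convex polyhedra by the partition assumption and is convex by the standing assumption, it is itself a convex polyhedron. Lemma~\ref{lem:polyhedron_exact_minimization} then shows that each $\Mcal_{k,i}$ chosen as exact minimization satisfies monotonicity, continuity, and stability; Lemma~\ref{lem:polyhedron_inexact_minimization} extends this to the inexact cousins. Each $V_{k,i}$ is one-dimensional, so the local problem is piecewise smooth on an interval, and the uniqueness of the one-dimensional minimizer that Theorem~\ref{thm:gs_stationary} requires can be handled by observing that only $\Jcal\circ\Mcal_{k,i}$ enters the hypotheses. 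Invoking Theorem~\ref{thm:gs_stationary} immediately yields that every accumulation point of $(u^\nu)$ is stationary with respect to every $V_{k,i}$, giving the first assertion.

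For the second assertion, assume $\Jcal$ is convex with a unique global minimizer $u^\ast$. By Corollary~\ref{cor:gs_convergence} it suffices to show that the decomposition $\R^n = \sum_{k,i} V_{k,i}$ is compatible with $\Jcal$. Let $u \in \op{dom}\Jcal$ satisfy $\Jcal(u) \leq \Jcal(u+v)$ for every $v \in V_{k,i}$; then $\Jcal'(u; \pm v_j) \geq 0$ for every generator $v_j \in V_{k,i}$. Fix an arbitrary $w \in \op{dom}\Jcal$; by convexity it is enough to prove $\Jcal'(u; w-u) \geq 0$. The increment $w - u$ points into $\op{dom}\Jcal$, so it lies in the tangent cone at $u$ of some polyhedral cell of the partition, and by the tangent-cone hypothesis this cone is generated by finitely many vectors drawn from the $V_{k,i}$'s with nonnegative coefficients. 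Inside the chosen smooth cell, $\Jcal$ is $C^1$, so $\Jcal'(u; \cdot)$ is linear on the cone and the nonnegative combination inherits the sign $\Jcal'(u; w-u) \geq 0$; the assertion then extends to arbitrary $w$ by global convexity of $\Jcal$. Corollary~\ref{cor:gs_convergence} then delivers convergence of $(u^\nu)$ to $u^\ast$.

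The main obstacle is the compatibility argument. Mere sublinearity of the convex directional derivative $\Jcal'(u; \cdot)$ only provides an upper bound on $\Jcal'(u; \sum_j \alpha_j v_j)$ in terms of $\sum_j \alpha_j \Jcal'(u; v_j)$, which is the wrong direction. The correct reasoning must exploit the piecewise smooth structure so that $\Jcal'(u; \cdot)$ is \emph{linear} on each single cell, turning the one-dimensional stationarity into nonnegativity of the full directional derivative. Additional bookkeeping is required when $u$ lies on the common boundary of several cells, where generators of the various tangent cones may come from different $V_{k,i}$'s and one has to patch the local arguments together using convexity; this is essentially the situation treated in~\cite{GraeserSander2014}.
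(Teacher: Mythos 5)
Your proposal follows the paper's own route almost exactly: apply Theorem~\ref{thm:gs_stationary} and Corollary~\ref{cor:gs_convergence} to the fine decomposition~\eqref{eq:pgs_decomposition}, and obtain monotonicity, continuity and stability of the one-dimensional operators from Lemmas~\ref{lem:polyhedron_exact_minimization} and~\ref{lem:polyhedron_inexact_minimization} (your observation that $\op{dom}\Jcal=\op{dom}\varphi$, being a convex finite union of closed convex polyhedra, is itself a convex polyhedron is also implicitly needed by the paper). The one point where you diverge is compatibility of the splitting: the paper simply cites Lemma~5.1 of~\cite{GraeserSander2014}, whereas you sketch the argument directly, and your sketch is sound --- since there are finitely many closed convex cells, for small $t>0$ the segment $u+t(w-u)$ lies in a single cell, so $w-u$ belongs to that cell's tangent cone, which by hypothesis is conically generated by vectors from the $V_{k,i}$; on that cone the one-sided derivative of $\Jcal$ coincides with the linear functional built from $\Jcal_0'(u)$ and the gradient of the smooth representative of $\varphi$ on the cell, so the one-dimensional stationarity gives $\Jcal'(u;w-u)\geq 0$ and convexity finishes; in particular the ``patching across several cells'' you worry about at the end is not needed, because only the single cell containing the initial segment ever enters. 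Buying this is a self-contained proof of the compatibility step instead of an external citation, which is in the spirit of the paper's claim to reprove the results of~\cite{GraeserSander2014} more simply. One caveat: Theorem~\ref{thm:gs_stationary} also assumes that $\Jcal(w+\cdot)$ has a \emph{unique} minimizer in each $V_{k,i}$, and your dismissal of this (``only $\Jcal\circ\Mcal_{k,i}$ enters the hypotheses'') is not valid, since uniqueness is a property of $\Jcal$ used inside the proof of that theorem to conclude $w^k=w^{k-1}$; the paper's proof is, however, equally silent on this hypothesis, so it is not a gap relative to the paper.
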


\begin{proof}
The result follows directly from Theorem~\ref{thm:gs_stationary} and Corollary~\ref{cor:gs_convergence},
if we can show continuity of $\Jcal \circ \Mcal_{k,i}$ and compatibility of the
decomposition~\eqref{eq:pgs_decomposition}.
The continuity of $\Jcal \circ \InexactArgmin_{k,i}$ was shown in
Lemmas~\ref{lem:polyhedron_exact_minimization} and~\ref{lem:polyhedron_inexact_minimization}
for the exact and the inexact case, respectively.  The compatibility of the splitting
with $\Jcal$ was shown in Lemma~5.1 of~\cite{GraeserSander2014}.
\end{proof}

\subsection{First-order models}

In this section we assume that $\Jcal$ is block-separable nonsmooth
and that the decomposition is induced by the block-structure,
i.e., \eqref{eq:decomposition_by_blocks} holds.
Then the local subproblems~\eqref{eq:local_subproblem}
take the form
\begin{align*}
    v^* \in \R^N: \qquad f(v^*) \leq f(v) \qquad \forall v \in \R^N
\end{align*}
where
\begin{align}
    \label{eq:model_restricted_functions}
    f(v) = f_w(v) = \underbrace{\Jcal_0(w+P v)}_{\equalscolon f_0(v)}
                  + \underbrace{\varphi_k(R_kw + v)}_{\equalscolon \psi(v)},
\end{align}
and $P = P_k$ is the prolongation operator defined in~\eqref{eq:prolongation_operators}.
Under the assumption that $f$ is convex
we will now construct inexact solvers for these problems
by solving approximate problems exactly.
To this end we introduce the notion of first-order dominating models.
These models differ from standard first-order models by
only approximating the smooth part $f_0$ of the functional $f$,
while the nonsmooth part $\psi$ is treated exactly.

Since we only need these models for incremental
problems we can simplify the situation without loss of generality
by only considering models fitted to $f$ in the origin.

\begin{definition}
    A functional $M:\R^N \to \Rinfty$ is called a
    \emph{first-order model}
    for $f$ at $0 \in \op{dom} \psi$ if there is a continuously differentiable $M_0:\R^N \to \R$ with
    \begin{align*}
        M(v) = f_0(0) + \inner{f_0'(0)}{v} + M_0(v) + \psi(v)
    \end{align*}
    such that $M_0(0) = 0$ and $M'_0(0) = 0$.
\end{definition}

First-order models allow to detect minimizers, i.e., if the original
functional is not minimal in $0$, then neither is the model.

\begin{lemma}
    \label{lem:model_decrease}
    Let $M$ be a first-order model of $f$ at $0 \in \R^N$,
    and $v \in \R^N$ with $f(v)<f(0)$. Then there is
    a $t>0$ with $M(tv) < M(0) = f(0)$.
\end{lemma}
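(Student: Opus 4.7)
The plan is to show that the one-sided directional slope of $M$ at the origin along $v$ is strictly negative, so that for all sufficiently small $t>0$ one has $M(tv)<M(0)$. The matching of $M$ and $f$ to first order at $0$, combined with convexity of $f$ along the segment from $0$ to $v$, is what makes this work.

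First I would unpack the quantity of interest. Since $M_0(0)=0$, the definition of the model gives $M(0)=f_0(0)+\psi(0)=f(0)$, so
\begin{equation*}
    M(tv)-M(0) = t\inner{f_0'(0)}{v} + M_0(tv) + \psi(tv) - \psi(0).
\end{equation*}
Next I would use convexity of $f$ (the standing assumption of this subsection) along the segment $\{sv\st s\in[0,1]\}$ to write $f(tv)-f(0)\leq t\bigl(f(v)-f(0)\bigr)$ for $t\in[0,1]$, which, after splitting $f=f_0+\psi$, yields the bound
\begin{equation*}
    \psi(tv)-\psi(0) \leq t\bigl(f(v)-f(0)\bigr) - \bigl(f_0(tv)-f_0(0)\bigr).
\end{equation*}
The continuous differentiability of $f_0$ gives $f_0(tv)-f_0(0)=t\inner{f_0'(0)}{v}+o(t)$ as $t\to 0^+$, and the $C^1$ regularity of $M_0$ together with $M_0(0)=0$ and $M_0'(0)=0$ gives $M_0(tv)=o(t)$. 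Substituting everything into the expansion of $M(tv)-M(0)$, the two copies of $t\inner{f_0'(0)}{v}$ cancel, the $M_0(tv)$ and $f_0$-remainder terms collapse into a single $o(t)$, and one is left with
\begin{equation*}
    M(tv)-M(0) \leq t\bigl(f(v)-f(0)\bigr) + o(t).
\end{equation*}
Since $f(v)-f(0)<0$ by hypothesis, the leading-order term dominates the $o(t)$ correction for $t$ small enough and the right-hand side becomes strictly negative, proving the claim.

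The main obstacle is that $f_0$ itself is not assumed to be convex, so one cannot directly compare $f_0(v)-f_0(0)$ with $\inner{f_0'(0)}{v}$ at $v$ and conclude anything pointwise. The convexity assumption must be applied to the full sum $f=f_0+\psi$ along the ray through $v$, and only together with the first-order expansion of $f_0$ at the origin and the first-order matching built into the model do the linear terms cancel, allowing the strict decrease of $f$ at $v$ to be transferred to a strict decrease of $M$ on $\{tv\st t>0\}$ near the origin.
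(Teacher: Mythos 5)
Your proof is correct, and its skeleton coincides with the paper's: both arguments rest on convexity of $f$ along the segment from $0$ to $v$, the identity $M(0)=f(0)$, and the fact that $M_0(tv)$ is negligible to first order because $M_0(0)=0$ and $M_0'(0)=0$. The one place you deviate is the treatment of the smooth part: the paper bounds $f(tv)-f(0)$ from below by $\inner{f_0'(0)}{tv}+\psi(tv)-\psi(0)$, i.e.\ it invokes the pointwise inequality $f_0(tv)-f_0(0)\geq\inner{f_0'(0)}{tv}$, which taken literally requires convexity of $f_0$ along the ray (only $f$, not $f_0$, is assumed convex in this subsection), whereas you expand $f_0(tv)-f_0(0)=t\inner{f_0'(0)}{v}+o(t)$ and absorb the remainder together with $M_0(tv)=o(t)$ before choosing $t$ small. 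This buys an argument that needs nothing beyond differentiability of $f_0$ at the origin and is insensitive to possible nonconvexity of $f_0$; the paper's variant, once its $f_0$-inequality is justified the same way via the Taylor remainder (which its final ``sufficiently small $t$'' step can easily accommodate), yields the slightly more quantitative bound $M(tv)-M(0)\leq\tfrac{t}{2}\bigl(f(v)-f(0)\bigr)$, but both routes deliver exactly the conclusion of the lemma.
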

\begin{proof}
    From convexity of $f$ we get for any $t \in (0,1)$ that
    \begin{align*}
        0 > f(v)-f(0)
        &\geq \frac{1}{t}(f(tv)-f(0)) \\
        &\geq \frac{1}{t}\bigl[ \inner{f_0'(0)}{tv} +\psi(tv) - \psi(0) \bigr].
    \end{align*}
    As $M_0$ is differentiable with $M_0'(0)=0$ we get for sufficiently small $t>0$ that
    \begin{align*}
        \frac{1}{2}\underbrace{(f(0)-f(v))}_{>0}
        \geq
            \frac{1}{t}\bigl[M_0(tv) + M_0(0)\bigr].
    \end{align*}
    Adding both inequalities and multiplying by $t>0$ yields
    \begin{equation*}
        0> \frac{t}{2} \bigl( f(v)-f(0) \bigr)
        \geq  M(tv) - M(0).
        \qedhere
    \end{equation*}
\end{proof}

Although a first-order model can only be minimal if the
exact functional is minimal, minimizing the model does not necessarily decrease the
value of the original functional. In order to obtain this
property the model must also be \emph{dominating}.

\begin{definition}
    A first-order model $M:\R^N \to \Rinfty$ for $f$ at
    $0 \in \op{dom} \psi$
    is called \emph{dominating} if it satisfies
    $M(v) \geq f(v)$ for all $v\in \R^N$.
\end{definition}

We will now show that exact minimization of a dominating first-order
model leads to an inexact minimization operator $\InexactArgmin$ in the sense
of Theorem~\ref{thm:gs_stationary}. Here, the exact minimization of the model
will play the role of the continuous minimization operator $\ContinuousArgmin$.
We define
\begin{align}\label{eq:inexact_min_by_model}
    \InexactArgmin^\text{mod}(w) \colonequals w + P \argmin_{v \in \R^N} M(v).
\end{align}
We will only consider the case that $M_0$ does not depend on $w$.
The following results can be generalized to $w$-dependent $M_0$ under strong technical
continuity assumptions for $M_0'$ with respect to $w$.

\begin{theorem}
    For any $w \in \op{dom} \Jcal$ let $f_w$ be as defined
    in~\eqref{eq:model_restricted_functions}.
    Set
    \begin{align*}
        M_w(v) \colonequals f_{w,0}(0) + \inner{f_{w,0}'(0)}{v} + M_0(v) + \psi(v)
    \end{align*}
    for some fixed function $M_0$, and assume that $M_w$ is a first-order dominating
    model for $f_w$ in each $w \in \op{dom} \Jcal$.
    Furthermore assume that $M_{0}'$
    is uniformly monotone, i.e.,
    there are constants
    $\alpha>0$ and $p>1$ such that
    \begin{align*}
        \alpha \|u-v\|^{p} \leq \inner{M_{0}'(u) - M_{0}'(v)}{u-v} \qquad \forall u,v \in \R^N.
    \end{align*}
    Then $\InexactArgmin^\text{mod}$ as defined in \eqref{eq:inexact_min_by_model}
    is an inexact minimization operator in the sense of
    Theorem~\ref{thm:gs_stationary}.
\end{theorem}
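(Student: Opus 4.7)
The plan is to take $\ContinuousArgmin = \InexactArgmin^{\text{mod}}$ itself and verify the three hypotheses of Theorem~\ref{thm:gs_stationary}. First, uniform monotonicity of $M_0'$ with exponent $p > 1$, combined with $M_0(0) = 0$ and $M_0'(0) = 0$, integrates along segments to the strong-convexity-type bound $M_0(v) \geq \frac{\alpha}{p}\|v\|^p$. Hence every $M_w$ is strictly convex and coercive on $\R^N$, so it admits a unique minimizer $v^*(w)$, and $\InexactArgmin^{\text{mod}}$ is well-defined as a map $\op{dom}\Jcal \to \op{dom}\Jcal$.

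Monotonicity and stability are the easy parts. Monotonicity is one line from the dominating-model property: $\Jcal(\InexactArgmin^{\text{mod}}(w)) = f_w(v^*(w)) \leq M_w(v^*(w)) \leq M_w(0) = f_w(0) = \Jcal(w)$, where the first inequality uses domination and the second uses optimality of $v^*(w)$. For stability, if $\Jcal$ is not minimal at $w$ along $w+V$, pick $v$ with $f_w(v) < f_w(0)$; then Lemma~\ref{lem:model_decrease} produces $t > 0$ with $M_w(tv) < M_w(0)$, so $M_w(v^*(w)) < M_w(0)$, and domination promotes this to $\Jcal(\InexactArgmin^{\text{mod}}(w)) < \Jcal(w)$.

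The main obstacle is continuity of $\Jcal \circ \InexactArgmin^{\text{mod}}$, which I would derive by first establishing continuity of the minimizer map $w \mapsto v^*(w)$ on $\op{dom}\Jcal$ and then invoking continuity of $\Jcal$ on its domain. Fix $w_n \to w$ in $\op{dom}\Jcal$ and write $v_n^* \colonequals v^*(w_n)$. From $M_{w_n}(v_n^*) \leq M_{w_n}(0) = \Jcal(w_n)$ (uniformly bounded in $n$), the coercivity estimate on $M_0$, continuity of $\Jcal_0$ and $\Jcal_0'$, and any affine minorant of $\varphi_k$, one obtains a uniform bound on $(v_n^*)$. For any accumulation point $\hat v$, passing to the limit in the optimality inequality $M_{w_n}(v_n^*) \leq M_{w_n}(v)$ for suitably chosen test points $v$ — using continuity of the smooth contributions and lower semicontinuity of $\varphi_k$ — yields $M_w(\hat v) \leq M_w(v)$ for all admissible $v$, so by uniqueness $\hat v = v^*(w)$. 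Hence $v_n^* \to v^*(w)$, and $\Jcal(w_n + P v_n^*) \to \Jcal(w + P v^*(w))$ follows from continuity of $\Jcal$ on its domain.

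The step I expect to be the most delicate is producing a fixed test point $v$ that lies in the effective domain $\{v : R_k w_n + v \in \op{dom}\varphi_k\}$ for all sufficiently large $n$: this domain shifts with $w_n$, so a density/continuity argument on $\op{dom}\varphi_k$ (using that $\varphi_k$ is continuous on its domain by assumption) is needed to obtain the clean $\limsup$ inequality $\limsup_n M_{w_n}(v) \leq M_w(v)$ that drives the limit passage. Once this subtlety is handled, all three hypotheses of Theorem~\ref{thm:gs_stationary} hold for $\ContinuousArgmin = \InexactArgmin^{\text{mod}}$, concluding the proof.
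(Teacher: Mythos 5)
Your verification of monotonicity and stability coincides with the paper's: both take $\ContinuousArgmin = \InexactArgmin^\text{mod}$ itself, obtain monotonicity from the domination property via $\Jcal(\InexactArgmin^\text{mod}(w)) \leq M_w(\argmin M_w) \leq M_w(0) = \Jcal(w)$, and obtain stability from Lemma~\ref{lem:model_decrease}. Where you genuinely differ is the key step, continuity of $w \mapsto \argmin_{v} M_w(v)$. The paper proves it quantitatively: it writes the variational inequalities characterizing the minimizers $v_1,v_2$ for two points $w_1,w_2$, tests each with the other minimizer and adds, so that the nonsmooth terms cancel and uniform monotonicity yields $\alpha\|v_1-v_2\|^{p-1} \leq \|P\|\,\|\Jcal_0'(w_1)-\Jcal_0'(w_2)\|$, i.e.\ an explicit H\"older-type modulus inherited from the continuity of $\Jcal_0'$. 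You instead run a soft compactness/argmin-stability argument: uniform boundedness of the minimizers from the growth bound $M_0(v)\geq \tfrac{\alpha}{p}\|v\|^p$ (which you correctly derive from uniform monotonicity and $M_0'(0)=0$), extraction of a convergent subsequence, passage to the limit in the optimality inequality (lower semicontinuity of $\varphi_k$ for the liminf, recovery test points for the limsup), and uniqueness of the minimizer to identify the limit. Both routes are sound; the paper's is shorter and gives a modulus of continuity, while yours is purely qualitative but treats the $w$-dependence of the nonsmooth part $\psi_w(\cdot)=\varphi_k(R_k w+\cdot)$ head-on, a dependence the paper's cancellation of the $\psi$-terms passes over silently. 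The step you flag as most delicate has a one-line resolution that makes your argument complete: given a test point $v$ with $R_k w + v \in \op{dom}\varphi_k$, take $v_n \colonequals v + R_k(w-w_n)$; then $R_k w_n + v_n = R_k w + v$, so the nonsmooth term is exactly constant along the recovery sequence, the smooth terms converge by continuity of $\Jcal_0$ and $\Jcal_0'$, and no density argument or continuity of $\varphi_k$ on its domain is needed.
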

\begin{proof}
    By construction any dominating model leads to a monotone $\InexactArgmin^\text{mod}$,
    and Lemma~\ref{lem:model_decrease} shows that first-order models
    lead to stable $\InexactArgmin^\text{mod}$.
    Now we show continuity of $\InexactArgmin^\text{mod}$ and hence of $\Jcal \circ \InexactArgmin^\text{mod}$.

    For $i=1,2$ let $w_i \in \op{dom} \Jcal$ and $v_i = \argmin_{v \in \R^N} M_{w_i}(v)$.
    Then we have
    \begin{align*}
        \big\langle{M_{0}'(v_i) + f_{w_i,0}'(0)},{v - v_i}\big\rangle + \psi(v) - \psi(v_i) \geq 0.
    \end{align*}
    Testing this variational inequality for $i=1,2$ with $v_j$, $j\neq i$,
    adding the results, and using monotonicity gives
    \begin{align*}
        \alpha \|v_1-v_2\|^p \leq
        \big\langle{M_{0}'(v_1)-M_{0}'(v_2)},{v_1-v_2}\big\rangle
            &\leq \|f_{w_1,0}'(0) - f_{w_2,0}'(0)\| \|v_1-v_2\| \\
            &\leq \|P\| \|\Jcal_0'(w_1) - \Jcal_0'(w_2)\| \|v_1-v_2\|.
    \end{align*}
    Dividing by $\|v_1-v_2\|$ and exploiting
    continuity of $\Jcal_0'$ we find that
    $\argmin_{v \in \R^N} M_{w}(v)$
    and thus $\InexactArgmin$ depends continuously on $w$.
\end{proof}

\bigskip

The theory of first-order dominating models is useful because it is frequently easy to construct
models that are much easier to minimize than the actual functional, while at the same time
producing sufficient descent to act as a nonlinear smoother for a fast multigrid method.
In the following we briefly discuss how such models can be constructed.

The proof of Lemma~\ref{lemma:quadratic_dominating} below requires the following sufficient
condition, which is of independent interest.
\begin{lemma}
    \label{lemma:dominating}
    Let $M$ be a first-order model of $f$ at $0\in \R^N$, and
    \begin{align*}
        \inner{f_0'(v)-f_0'(0)}{v} \leq \inner{M_0'(v)-M_0'(0)}{v} \qquad \forall v \in \R^N.
    \end{align*}
    Then $M$ is also a dominating model.
\end{lemma}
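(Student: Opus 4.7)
The plan is to show directly that the difference $D(v) \colonequals M(v) - f(v)$ is nonnegative on all of $\R^N$. Expanding using the definitions of $M$ and $f$, the $\psi$-terms cancel (or both values are $+\infty$ off $\op{dom}\psi$, in which case the inequality is trivial), so it suffices to prove that
\begin{align*}
    D(v) = f_0(0) - f_0(v) + \inner{f_0'(0)}{v} + M_0(v) \geq 0
        \qquad \forall v \in \R^N.
\end{align*}
The key observation is that $D$ vanishes to first order at the origin: using $M_0(0) = 0$ and $M_0'(0) = 0$, one directly checks $D(0) = 0$ and $D'(0) = 0$. So the plan is to recover $D(v) \geq 0$ from the growth of $D$ along the ray from $0$ to $v$.

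First, I would fix $v \in \R^N$ and introduce the one-dimensional reduction $\phi(t) \colonequals D(tv)$ for $t \in [0,1]$, which is continuously differentiable since $f_0$ and $M_0$ are. A direct computation gives
\begin{align*}
    \phi'(t) = \inner{-f_0'(tv) + f_0'(0) + M_0'(tv)}{v}.
\end{align*}
Next, for $t > 0$ I would multiply by $t$ and rewrite the right-hand side as
\begin{align*}
    t\phi'(t) = \inner{M_0'(tv) - M_0'(0)}{tv} - \inner{f_0'(tv) - f_0'(0)}{tv},
\end{align*}
using $M_0'(0) = 0$. Applying the hypothesis with $tv$ in place of $v$ shows that this quantity is nonnegative, so $\phi'(t) \geq 0$ for $t \in (0,1]$. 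Since $\phi(0) = 0$, integrating yields $\phi(1) = D(v) \geq 0$, which is the desired domination.

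The argument is essentially a one-line reduction to an integrated form of the hypothesis, so there is no real obstacle. The only subtlety worth mentioning explicitly is the handling of $v \notin \op{dom}\psi$, which is dispatched by noting that both $M(v)$ and $f(v)$ equal $+\infty$ there, and the continuity/finiteness of $f_0$, $M_0$, and $f_0'(0)$ ensure the reduction above is legitimate for $v \in \op{dom}\psi$ where the nontrivial inequality is needed.
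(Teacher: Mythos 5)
Your proof is correct and is essentially the paper's argument in differential form: both proofs apply the hypothesis at the scaled points $tv$ and integrate over $t\in[0,1]$, the paper via the integral representation $f_0(v)=f_0(0)+\inner{f_0'(0)}{v}+\int_0^1 \tfrac{1}{t}\inner{f_0'(tv)-f_0'(0)}{tv}\,dt$ bounded termwise by $M_0$, you via monotonicity of $\phi(t)=M(tv)-f(tv)$. Your explicit handling of the case $v\notin\op{dom}\psi$ is a minor tidy-up of a point the paper leaves implicit.
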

\begin{proof}
    For $v \in \R^N$ we have
    \begin{align*}
            f(v)
                &= \psi(v) + f_0(0) + \inner{f_0'(0)}{v} + \int_0^1 \frac{1}{t} \inner{f_0'(tv)-f_0'(0)}{tv} \, dt \\
                &\leq \psi(v) + f_0(0) + \inner{f_0'(0)}{v} + \int_0^1 \inner{M_0'(tv)}{v} -\inner{M_0'(0)}{v} \, dt
                = M(v),
    \end{align*}
    which is the assertion.
\end{proof}

With this result we can construct models with a quadratic smooth part.

\begin{lemma}
    \label{lemma:quadratic_dominating}
    Let $B \in \R^{N \times N}$ be a symmetric positive semi-definite matrix.
    Then
    \begin{align*}
        M(v) = f_0(0) +  \inner{f_0'(0)}{v} + \frac{1}{2}\inner{B v}{v} + \psi(v)
    \end{align*}
    is a first-order model for $f$ at $0$.
    If $B$ satisfies
    \begin{align}
    \label{eq:condition_quadratic_model}
        \inner{f_0'(v)-f_0'(0)}{v} \leq \inner{Bv}{v} \qquad \forall v \in \R^N,
    \end{align}
    then $M$ is dominating.
\end{lemma}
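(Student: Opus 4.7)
The plan is to verify the two claims separately, with the first being a direct check against the definition of a first-order model and the second reducing immediately to Lemma~\ref{lemma:dominating}. There is essentially no obstacle here, since the heavy lifting has already been done in the general sufficient condition of Lemma~\ref{lemma:dominating}; the only work is to compute $M_0'$ and observe that the assumed condition~\eqref{eq:condition_quadratic_model} is exactly the hypothesis of that lemma in this special case.

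For the first part, I would read off $M_0(v) = \tfrac{1}{2}\langle Bv,v\rangle$ from the definition of $M$. Since $B$ is symmetric, $M_0$ is continuously differentiable with gradient $M_0'(v) = Bv$. In particular $M_0(0) = 0$ and $M_0'(0) = 0$, which are the two conditions required to call $M$ a first-order model at $0$. The decomposition $M(v) = f_0(0) + \langle f_0'(0),v\rangle + M_0(v) + \psi(v)$ is literally the formula given in the statement.

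For the dominating property, I would invoke Lemma~\ref{lemma:dominating}. Its sufficient condition reads $\langle f_0'(v) - f_0'(0), v\rangle \leq \langle M_0'(v) - M_0'(0), v\rangle$ for all $v \in \R^N$. Substituting $M_0'(v) = Bv$ and $M_0'(0) = 0$, the right-hand side becomes $\langle Bv, v\rangle$, so the sufficient condition is precisely the assumption~\eqref{eq:condition_quadratic_model}. Therefore Lemma~\ref{lemma:dominating} applies and yields $M(v) \geq f(v)$ for all $v$, i.e., $M$ is dominating. Positive semi-definiteness of $B$ is not needed for the dominating step itself, but it guarantees convexity of $M_0$ (and hence of $M$, given convexity of $\psi$), which is what makes exact minimization of $M$ a sensible subproblem to solve in the construction preceding the lemma.
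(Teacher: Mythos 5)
Your proof is correct and follows the same route as the paper: the first-order model property is a direct check with $M_0(v)=\tfrac12\inner{Bv}{v}$, and the dominating property follows by observing $M_0'(v)-M_0'(0)=Bv$ and invoking Lemma~\ref{lemma:dominating}, whose hypothesis is exactly \eqref{eq:condition_quadratic_model}. Your side remark that positive semi-definiteness of $B$ is not needed for domination but only for convexity of the model is accurate and consistent with the paper's usage.
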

\begin{proof}
    The fact that $M$ is a first-order model is trivial.
    If $B$ satisfies~\eqref{eq:condition_quadratic_model}, then
    $M_0'(v)-M_0'(0) = Bv$, and Lemma~\ref{lemma:dominating} implies that $M$ is dominating.
\end{proof}

\begin{example}\label{example:quadratic_smooth}
    Let $f(v)=\frac{1}{2}\inner{Av}{v} - \inner{b}{v} + \psi(v)$
    and $\inner{Av}{v} \leq \inner{Bv}{v}$ for all $v \in \R^N$
    for symmetric positive definite matrices $A,B$.
    Then
    \begin{align*}
        M(v) = \frac{1}{2}\inner{Bv}{v} - \inner{b}{v} + \psi(v)
    \end{align*}
    is a dominating  first-order model for $f$ at $0$.
\end{example}

Models of this type are interesting, because approximating matrices $B$ can be easy
to construct as long as $A$ is small. The following radical choice can be applied
to plasticity and friction problems, which are represented by Example~\ref{ex:norm_nondifferentiability}.

\begin{example}
    \label{example:rotationally_symmetric_model}
    Let $f$ and $M$ be as in Example~\ref{example:quadratic_smooth}
    with $\psi(v) = h(\|v_0+v\|)$, and $B=\alpha I$ with $\alpha \geq \lambda_{\max}(A)$.
    Then $M$ can be written as
    \begin{align*}
            M(v) &= \frac{\alpha}{2}\|v\|^2 - \inner{b}{v} + \psi(v) \\
                 &= \frac{\alpha}{2}\|v_0+v\|^2 - \inner{b + \alpha v_0}{v_0+v}  + \frac{\alpha}{2}\|v_0\|^2 + \inner{b}{v_0} + \psi(v) \\
                 &= \tilde{h}(\|v_0+v\|) - \inner{\tilde{r}}{v_0+v} + \op{const},
    \end{align*}
    with $\tilde{h}(t) = \frac{\alpha}{2}t^2+h(t)$ and
    $\tilde{r}=b+\alpha v_0$.
\end{example}

In this example $M$ is rotationally symmetric with respect to $v-v_0$.
Hence the minimizer $u$ of $M$ can be computed by
solving a scalar minimization problem
on the line $\{ t \tilde{r} - v_0 \st t \in \R \} \subset \R^N$.

\begin{lemma}
    Let $h:\R \to \Rinfty$, $r\in \R^N \setminus \{0\}$
    and $v_0 \in \R^N$.
    Then any minimizer $u$ of
    \begin{align*}
        g:\R^N \to \Rinfty, \qquad
        g(v)=h(\|v_0+v\|) - \inner{r}{v_0+v}
    \end{align*}
    satisfies $u = tr-v_0$ for $t>0$.
\end{lemma}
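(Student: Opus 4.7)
The plan is to reduce to a one-variable problem by the substitution $w \colonequals v_0 + v$. Minimizing $g(v)$ over $v \in \R^N$ becomes equivalent to minimizing $\tilde g(w) \colonequals h(\|w\|) - \inner{r}{w}$ over $w \in \R^N$, and the claim $u = tr - v_0$ translates into the corresponding minimizer $w^* = v_0 + u$ having the form $w^* = tr$ for some $t > 0$.

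The key observation is that $\tilde g(w)$ depends on $w$ only through the two rotationally invariant quantities $\|w\|$ and $\inner{r}{w}$. For any fixed radius $s = \|w\| > 0$, the value $\tilde g(w) = h(s) - \inner{r}{w}$ is minimized by maximizing the inner product on the sphere $\{\|w\| = s\}$. By the Cauchy--Schwarz inequality, $\inner{r}{w} \leq \|r\|\,\|w\| = s\|r\|$, with equality attained uniquely (since $r \neq 0$) at $w = (s/\|r\|)\,r$. Hence if $w^*$ is a minimizer with $\|w^*\| > 0$, it must take the form $w^* = tr$ with $t = \|w^*\|/\|r\| > 0$, which is precisely the claimed form of $u$.

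The only subtle point, and the main obstacle, is excluding the degenerate case $w^* = 0$, which would force $u = -v_0$ and is incompatible with $u = tr - v_0$ for any $t > 0$ since $r \neq 0$. This case is handled by a direct perturbation argument: compare $\tilde g(0) = h(0)$ with $\tilde g(\varepsilon r/\|r\|) = h(\varepsilon) - \varepsilon \|r\|$ for small $\varepsilon > 0$. In the setting where the lemma is actually applied (Example~\ref{example:rotationally_symmetric_model}, where the role of $h$ is played by $\tilde h(t) = \tfrac{\alpha}{2} t^2 + h(t)$ with a finite one-sided derivative of $h$ at $0$), the difference quotient $(h(\varepsilon) - h(0))/\varepsilon$ stays strictly below $\|r\|$ for sufficiently small $\varepsilon$, giving a strict decrease and ruling out $w^* = 0$. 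The rest of the argument is then a routine one-line application of Cauchy--Schwarz.
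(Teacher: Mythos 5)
Your argument is essentially the paper's own proof: for a minimizer $u$ such that $v_0+u$ is not a positive multiple of $r$, compare with the point of the same radius aligned with $r$ (the paper's $\tilde{u} = \frac{\|v_0+u\|}{\|r\|}r - v_0$) and use strict Cauchy--Schwarz to obtain a strictly smaller value of $g$. Your extra attention to the degenerate case $v_0+u=0$ is warranted but goes beyond the lemma's hypotheses: the paper's proof silently skips this case, and indeed the stated assumptions alone do not exclude a minimizer with $v_0+u=0$ (take $h(s)=s$ and $\|r\|<1$), so excluding it, as you do, needs the additional structure of the application in Example~\ref{example:rotationally_symmetric_model}; within the lemma as written one has to tolerate $t=0$ there.
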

\begin{proof}
    Let $u$ be a minimizer and assume that $u$ is not of the form $u = tr - v_0$.
    Then $|\inner{r}{v_0+u}| < \|r\|\|v_0+u\|$.
    Now let $\tilde{u} = \frac{\|v_0+u\|}{\|r\|}r - v_0$. Then we have
    \begin{align*}
        g(\tilde{u})
            &= h(\|v_0+u\|) - \|r\|\|v_0+u\| \\
            &< h(\|v_0+u\|) - |\inner{r}{v_0+u}|
        \leq h(\|v_0+u\|) - \inner{r}{v_0+u} = g(u),
    \end{align*}
    which contradicts the assumption.
\end{proof}

Another example where the simplification proposed in Example~\ref{example:rotationally_symmetric_model}
can be helpful are problems with local simplex constraints
as introduced in Example~\ref{ex:vector_valued_phasefield}.

\begin{example}
    \label{example:simplex_constrained_model}
    Let $f$ and $M$ be as in Example~\ref{example:quadratic_smooth}
    with $B=\alpha I$, $\alpha \geq \lambda_{\max}(A)$,
    and $\psi(v) = \chi_G(v_0+v)$ where $\chi_G$ is the indicator
    function of the Gibbs simplex $G$ defined in \eqref{eq:gibbs_simplex}.
    Similar to Example~\ref{example:rotationally_symmetric_model} the
    dominating first-order model $M$ can now be written as
    \begin{align*}
        \begin{split}
            M(v)
                 &= \frac{\alpha}{2}\|v_0+v - \tilde{r}\|^2 + \chi_G(v_0+v) + \op{const}
        \end{split}
    \end{align*}
    with $\tilde{r}=\tfrac{1}{\alpha}b+v_0$.
    Hence the minimizer $u$ of $M$ can be written as $u = u_0 - v_0$
    where $u_0 = \argmin_{z \in G} \|z-\tilde{r}\|^2$
    is the Euclidean projection of $\tilde{r}$ into $G$.
    This is an important simplification, because this projection
    can be computed exactly with a simple algorithm in $O(N \log(N))$ time
    \cite{graeser_sander:2014, wang_carreira:2013}.
\end{example}

The previous examples assumed that the smooth part
$\Jcal_0$ of the functional is quadratic, and replaced
it by a simpler quadratic functional.
For problems like Example~\ref{example:anisotropic_problem} where this is not the case,
using local first-order models can still be beneficial.
The construction of such models is based on the following direct consequence of the chain rule.
\begin{lemma}
    \label{lemma:quasilinear_model_bound}
    Let the smooth part $\Jcal_0$ of the global functional take the form
    \begin{align*}
        \Jcal_0(v) = \sum_{i=1}^{l} \gamma_i (D_iv)
    \end{align*}
    with $D_i \in \R^{d \times n}$
    and $\gamma_i: \R^d \to \R$ convex and continuously
    differentiable for $i=1,\dots,l$.
    Assume that each $\gamma_i'$ is Lipschitz continuous
    with a Lipschitz constant $L_i$. Then
    \begin{align*}
        \inner{\Jcal_0'(x) - \Jcal_0'(y)}{x-y}
        \leq \inner{\Bcal (x-y)}{x-y}
        \qquad \forall x,y \in \R^n
    \end{align*}
    for the symmetric positive semi-definite matrix
    $\Bcal = \sum_{i=1}^l L_i D_i^T D_i \in \R^{n \times n}$.
\end{lemma}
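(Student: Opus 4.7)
The plan is to reduce the global inequality to a sum of per-summand inequalities via the chain rule, and then handle each summand by combining Cauchy--Schwarz with the Lipschitz assumption on $\gamma_i'$.

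First I would use the chain rule to express the gradient as $\Jcal_0'(v) = \sum_{i=1}^l D_i^T \gamma_i'(D_i v)$. Substituting this into the left-hand side of the claimed inequality and using the adjoint property of $D_i$, I would rewrite
\begin{align*}
    \inner{\Jcal_0'(x) - \Jcal_0'(y)}{x-y}
    = \sum_{i=1}^l \inner{\gamma_i'(D_i x) - \gamma_i'(D_i y)}{D_i x - D_i y},
\end{align*}
thereby moving the computation from $\R^n$ to the individual $\R^d$ factors where the Lipschitz hypothesis lives.

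Next, for each fixed $i$ I would apply the Cauchy--Schwarz inequality followed by the Lipschitz bound $\|\gamma_i'(D_i x) - \gamma_i'(D_i y)\| \leq L_i \|D_i x - D_i y\|$ to obtain
\begin{align*}
    \inner{\gamma_i'(D_i x) - \gamma_i'(D_i y)}{D_i x - D_i y}
    \leq L_i \|D_i x - D_i y\|^2
    = L_i \inner{D_i^T D_i (x-y)}{x-y}.
\end{align*}
Summing over $i$ and using linearity of the inner product yields the asserted bound with $\Bcal = \sum_{i=1}^l L_i D_i^T D_i$. Symmetry of $\Bcal$ is obvious from the form $D_i^T D_i$, and positive semi-definiteness follows since $\inner{D_i^T D_i v}{v} = \|D_i v\|^2 \geq 0$ and $L_i \geq 0$.

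There is no real obstacle here; the only subtlety is making sure the Lipschitz constants $L_i$ (which a priori control only $\|\gamma_i'(D_ix)-\gamma_i'(D_iy)\|$) pass cleanly into a quadratic form on $\R^n$, which they do because of the identity $\|D_i(x-y)\|^2 = \inner{D_i^T D_i(x-y)}{x-y}$. The proof will therefore be short, essentially a one-line chain-rule computation followed by a per-summand Cauchy--Schwarz/Lipschitz estimate.
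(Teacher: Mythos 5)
Your proof is correct, and it is exactly the argument the paper has in mind: the paper states the lemma without proof as a ``direct consequence of the chain rule,'' and your chain-rule decomposition followed by the per-summand Cauchy--Schwarz and Lipschitz estimate, together with the identity $\|D_i(x-y)\|^2 = \inner{D_i^T D_i(x-y)}{x-y}$, is the intended one-line verification. Nothing further is needed.
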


\begin{example}
    \label{example:anisotropic_model}
    Let the global problem take the form given in Example~\ref{example:anisotropic_problem}
    with $\Jcal_0$ as in Lemma~\ref{lemma:quasilinear_model_bound}.
    For the local problem consider the first-order
    model given in Lemma~\ref{lemma:quadratic_dominating}
    with $B = P^T \Bcal P$.
    Then Lemma~\ref{lemma:quasilinear_model_bound}
    together with the chain rule shows that
    \begin{align*}
        \inner{f_0'(x) - f_0'(y)}{x-y} \leq \inner{B(x-y)}{x-y}
        \qquad \forall x,y \in \R^N.
    \end{align*}
    Hence, by Lemma~\ref{lemma:quadratic_dominating} the model
    is also dominating.
    In contrast to $f_0$ the functional $M_0$ is quadratic
    such that the minimizer of $M$ can be directly computed.
    For problems with more general nonsmooth terms $\varphi_k$
    where this is not the case, using an iterative method
    to minimize $M$ will often be much faster
    because $M_0$ is cheaper to evaluate.
\end{example}

When using the first-order models presented here as the basis
for an iterative method to actually solve the local minimization
problem for $f$, the resulting
algorithms are variants of so-called proximal methods.
In case of the Examples~\ref{example:rotationally_symmetric_model}
and~\ref{example:simplex_constrained_model} these are proximal
gradient type methods, while the model in Example~\ref{example:anisotropic_model}
would lead to a proximal Newton-type method.
In the case that the nonsmooth part $\psi$ of $f$ is
the indicator function of a convex set while the model
$M_0$ for the smooth part is quadratic with a matrix $B=\alpha I$,
such methods can also be viewed as projected
gradient-type methods.

We emphasize that we do not propose and analyze the resulting
proximal methods here, but only discuss their use as inexact
local solvers inside of a nonlinear block Gau\ss--Seidel smoother.
For a discussion of proximal gradient and proximal Newton-type iterations
we refer to~\cite{LeeSunSaunders2014} and the references therein.

\section{Truncated Newton corrections}
\label{sec:coarse_grid_corrections}

\subsection{Constructing suitable correction spaces}

The second step of the TNNMG iteration consists of an inexact Newton step in an iteration-dependent
subspace $W_\nu$ of $\R^n$ that is constructed such that all necessary derivatives of $\Jcal$
are well-defined. In principle $W_\nu$ can be defined as the largest subspace of $\R^n$ such that there is an
open ball $B_\varepsilon(u^{\nu+\frac12})$ of radius $\varepsilon>0$ around $u^{\nu+\frac12}$ such that
$\Jcal$ is twice continuously differentiable on
\begin{align}
\label{eq:def_of_w_nu}
    (u^{\nu+\frac12} + W_\nu) \cap B_\varepsilon(u^{\nu+\frac12}).
\end{align}
However, depending on the problem, the practical construction of this subspace can be technical.

In many cases, the implementation can be simplified by using a correction space $W_\nu$ that is smaller
than possible.
The practical behavior of the TNNMG method may be unaffected when $W_\nu$ is replaced by a slightly
smaller space, and the convergence results hold for all spaces $W_\nu$ that allow for a
well-defined Newton problem.

Let the energy functional $\Jcal$ be block-separable nonsmooth i.e.,
\begin{equation*}
 \Jcal(v)
 =
 \Jcal_0(v) + \sum_{k=1}^M \varphi_k(R_k v),
 \qquad
 v \in \R^n,
\end{equation*}
with a $C^2$ functional $\Jcal_0$.
Then it is clear, that the space $W_\nu$ can be defined as a product space
\begin{equation}
 \label{eq:trunated_space_block_structure}
 W_\nu
 \colonequals R^{-1} \prod_{k=1}^M
    W_{\nu,k}
\end{equation}
where $W_{\nu,k}\subset \R^{N_k}$ is a local subspace such that $\varphi_k$ is locally smooth
near $R_ku^{\nu+\frac12}$ in the above given sense.
Here we make use of the isomorphism $R:\R^n \to \prod_{k=1}^M \R^{N_k}$
translating between $\R^n$ and the block representation according
to Definition~\ref{def:block_separable} such that the $k$-th block of $v \in \R^n$
is given by $R_k v \in \R^{N_k}$.

The easiest way to construct a suitable correction space $W_\nu$
is by disabling entire blocks.  For a given pre-smoothed iterate $u^{\nu + \frac{1}{2}}$ define
the set of ``inactive'' blocks
\begin{multline}
    \label{eq:inactive_blocks}
 \mathcal{N}_\nu^\circ
 \colonequals
 \Big\{ k =1,\dots,M
     \st
     \text{$\varphi''_k(R_k(\cdot))$ exists} \\
     \text{and is continuous on $B_\varepsilon(R_k u^{\nu+\frac12})$ for some $\epsilon>0$} \Big\}.
\end{multline}
Then frequently a reasonable correction space is
\begin{equation}
\label{eq:def_truncated_space}
 W_\nu
 \colonequals
 R^{-1}\prod_{k=1}^M
 \begin{cases}
  \R^{N_k}  & \text{if $k \in \mathcal{N}_\nu^\circ$}, \\
  \{ 0\}^{N_k} & \text{otherwise}.
 \end{cases}
\end{equation}
When all blocks are one-dimensional, this definition even yields the optimal space.
It also produces the optimal space for functionals as in
Example~\ref{ex:norm_nondifferentiability}, where the nonsmoothness is a block-wise norm function
\begin{equation*}
 \varphi_k(v) \colonequals \lVert v \rVert
 \qquad
 \forall v \in \R^{N_k}.
\end{equation*}

\begin{figure}
 \begin{center}
  \begin{overpic}[width=0.5\textwidth]{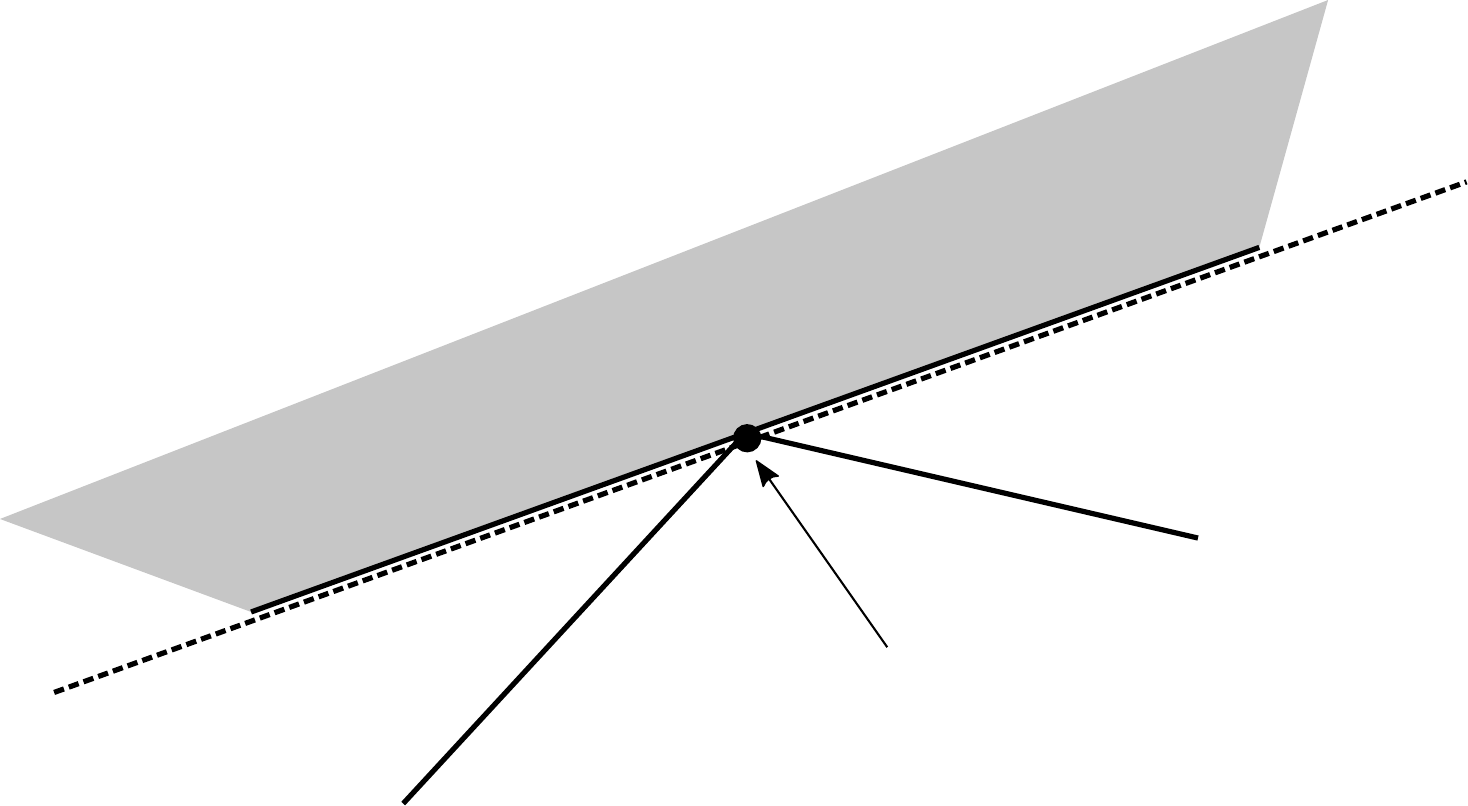}
   \put(60,5){vertex position $u^{\nu + \frac{1}{2}}$}
   \put(22,28){obstacle}
   \put(5,2){$W_\nu$}
   \put(93,33){$W_\nu$}
  \end{overpic}
  \hspace{0.08\textwidth}
  \begin{overpic}[width=0.4\textwidth]{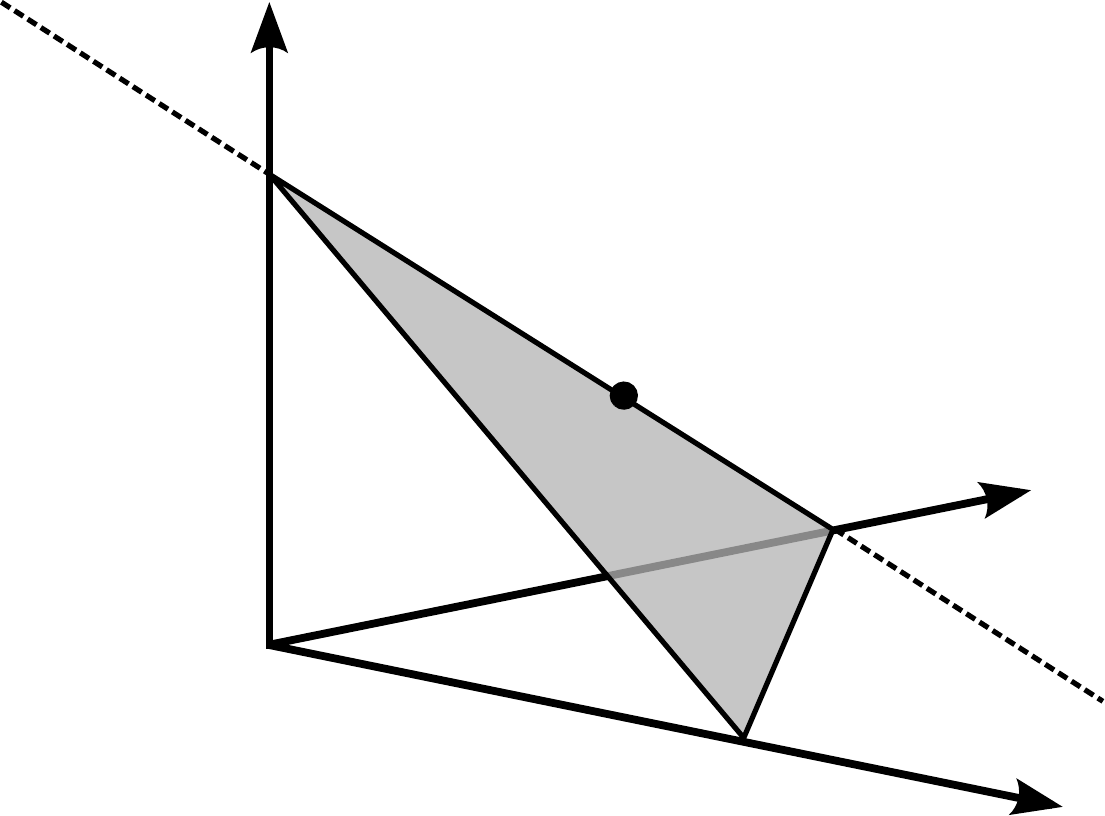}
   \put(60,40){$u^{\nu + \frac{1}{2}}$}
   \put(60,22){$G$}
   \put(5,58){$W_\nu$}
   \put(95,15){$W_\nu$}
  \end{overpic}
 \end{center}
 \caption{Constructing the correction spaces $W_\nu$.
   Left: Contact problem. At a grid vertex in contact, $W_\nu$ can contain the tangential direction.
   Right: Simplex-constrained problem. The correction spaces can contain the affine span
   of simplex faces.}
 \label{fig:nonsimple_correction_spaces}
\end{figure}

In other cases $\Jcal'$ and $\Jcal''$ may exist in larger spaces than the one
defined by~\eqref{eq:def_truncated_space}. As an example, consider a contact problem in
linear elastomechanics. There, it can be convenient to group the degrees of freedom in
$d$-dimensional blocks, corresponding to the Lagrange nodes of the finite element space.
For each $k=1,\dots,M$, the nonsmooth function $\varphi_k$ is then the indicator functional
of a half-space in $\R^d$, which models the restriction on the normal displacement of the $k$-th Lagrange node.
If the degrees of freedom in a block $k$ are in contact, the naive
approach~\eqref{eq:def_truncated_space} disables that block
completely. On the other hand, $\varphi_k$ is still $C^2$ in the tangential plane
of the half-space, and a bigger space $W_\nu$ could therefore be constructed by including these
tangent directions~\cite{GraeserSackSander2009} (Figure~\ref{fig:nonsimple_correction_spaces}, left).
Such a space then includes sliding along the obstacle in the linear correction step.

Vector-valued phase-field models as described in Example~\ref{ex:vector_valued_phasefield}
form a more extreme example.  In this example
the degrees of freedom are grouped in blocks of dimension $N^k=L$, and the nondifferentiability
$\varphi_k$ is the indicator functional $\chi_G$ of the Gibbs simplex $G$~\eqref{eq:gibbs_simplex}.
Here, block-wise truncation as in~\eqref{eq:def_truncated_space}
would truncate the whole space, since $\chi_G$ is nowhere differentiable.
Hence the linear correction would entirely be removed and the
method would degenerate to the pure nonlinear smoother.
Truncating the block only if the solution is on the relative
boundary of $G$ and keeping the tangent space to $G$
otherwise is still not a good choice, because solutions typically
live on the boundary of $G$ in almost every block $k=1,\dots,M$.
Again this would effectively result in an almost complete removal
of the linear correction.

Fortunately, for this example the largest possible correction space $W_\nu$
is much larger compared to the resulting space in both of these
simple constructions.
To really construct $W_\nu$ we consider
the active sets
\begin{equation*}
    \mathcal{N}_{\nu,k}^\bullet \colonequals \Big\{j = 1,\dots, L \st (R_k u^{\nu + \frac{1}{2}})_{j}=0\Big\}
\end{equation*}
for each block $R_k u^{\nu + \frac 12} \in G$.
Then $\Jcal$ is differentiable near $u^{\nu + \frac 12}$ in the space
\begin{align}
\label{eq:correction_space_for_phase_fields}
 W_\nu
 \colonequals
    R^{-1}\prod_{k=1}^M
    \op{span}\Bigl\{
        \eta^{ij} \in \Ecal \bigst
        i,j \notin \mathcal{N}_{\nu,k}^\bullet
    \Bigr\},
\end{align}
where
\begin{align*}
    \Ecal = \Big\{
        \eta^{ij}=e^i-e^j \in \R^L \bigst
        1\leq i < j \leq L \Big\}
\end{align*}
is the set of edges of $G$.
It is easy to see that this space is in fact the maximal subspace
of $\R^n$ where $\Jcal(u^{\nu + \frac 12} + \cdot)$ is differentiable near the origin:
On the one hand restriction of the nonsmooth term $\varphi(u^{\nu + \frac 12} + \cdot)$
to $W_\nu$ is constant in a ball around the origin such that the functional is
locally $C^2$.
On the other hand $\varphi(u^{\nu + \frac 12} + \cdot)$
has a jump from $0$ to $\infty$ at the origin along any other direction.

\begin{remark}
 Unfortunately, in an implementation, the decision of whether a degree of freedom is active or not
 can only be taken in an approximate sense, by replacing a condition like $R_k u = 0$ by
 $\lvert R_k u\rvert \le \varepsilon$ due to problems caused by finite-precision arithmetic.
 This effectively replaces the arbitrarily small positive $\varepsilon$
 that appears in~\eqref{eq:def_of_w_nu} and~\eqref{eq:inactive_blocks}
 by a fixed one.
 The choice of $\varepsilon$ can be important: if $\varepsilon$ is too small, then the solver
 may become unstable.  If it is too large, the convergence rates deteriorate. This effectively
 introduces a parameter into the algorithm; however, in a finite element context, while this
 parameter depends on the boundary value problem that is being solved, it does not depend
 on the discretization.
\end{remark}

For some applications
continuous differentiability is not the only criterion that guides
the construction of the correction spaces $W_\nu$.
For example, more realistic examples of phase-field models include
a nonlinear energy whose derivatives become singular when approaching the boundary of
the admissible set~\cite{GraeserKornhuberSack2014b}.
Using a linearization of $\Jcal$ in a space constructed
as discussed above may lead to unbounded derivatives as some $u_k$ approach the singularity.
As a consequence the condition number of the linear system may become
arbitrarily large, independently of the spatial discretization parameter
leading to numerical problems and slow convergence of linear solvers.

To avoid this the general truncation strategy can be extended
by---additionally to twice continuous differentiability---%
requiring that the second derivatives do not become ``too large''
in the truncated ball~\eqref{eq:def_of_w_nu}.
In practice this can be achieved by also checking
the values of second-order derivatives or the distance to the
singularity in the construction of the space.
For example, in a multi-component phase-field model where
$\varphi_k$ takes the form
\begin{align*}
    \varphi_k(z) = \chi_G(z) + \sum_{j=1}^L \varphi_{k,j}(z_j)
\end{align*}
and each $\varphi_{k,j}'$ and $\varphi_{k,j}''$ is singular in zero,
this leads to active sets defined by
\begin{equation*}
 \mathcal{N}^\bullet_{\nu,k}
 \colonequals
    \Big\{j = 1,\dots,L \bigst (R_k u^{\nu + \frac 12})_{j}=0 \text{ or } \varphi_{k,j}''((R_ku^{\nu + \frac 12})_{j})>C\Big\}
\end{equation*}
for a large constant $C$.
These active sets are then used in the definition~\eqref{eq:correction_space_for_phase_fields}
of the correction space $W_\nu$. This modification can lead to a considerable
increase in efficiency.

Certain energies, such as the anisotropic fracture energy in~\cite{miehe_welschinger_hofacker_2010},
have a smooth energy contribution $\Jcal_0$ that is is not $C^2$ but only differentiable with a
locally Lipschitz-continuous derivative ($LC^1$). For such cases, the definition of the space
$W_\nu$ can be relaxed, by replacing second derivatives in the definition of $W_\nu$ by generalized
derivatives of $\Jcal'$. In this case the correction step \eqref{eq:inexact_newton_problem}
takes the form of a generalized Newton-step as proposed, e.g., in
\cite{penot:2012}.

\subsection{Algebraic representations}

In practical implementations of the TNNMG method, matrix and vector representations of
$\Jcal''(u^{\nu + \frac{1}{2}})|_{W_\nu \times W_\nu}$ and $\Jcal'(u^{\nu + \frac{1}{2}})|_{W_\nu}$
are needed, respectively. For simplicity, implementations are suggested to always work with matrices
and vectors in $\R^n$, and extend $\Jcal''(u^{\nu + \frac{1}{2}})|_{W_\nu \times W_\nu}$
and $\Jcal'(u^{\nu + \frac{1}{2}})|_{W_\nu}$ by zero to the orthogonal complement
of $W_\nu$ in $\R^n$.

This extension is simple if $W_\nu$ is constructed by the truncation of active
blocks~\eqref{eq:def_truncated_space}.  In this case, the vector representation of $\Jcal'(u^{\nu + \frac{1}{2}})|_{W_\nu}$
is simply the vector in $\R^n$ that contains the partial derivatives of $\Jcal$ for all
$k \in \mathcal{N}^\circ_\nu$ (which exist by construction of $\mathcal{N}_\nu^\circ$), and zero elsewhere.
Similarly, the matrix representation of $\Jcal''(u^{\nu + \frac{1}{2}})|_{W_\nu \times W_\nu}$ contains the second partial derivative in all
blocks where neither the column nor the row index is active, and zero otherwise.
This makes the matrix semi-definite.

\begin{remark}
 It would be easy to make the matrix positive definite by replacing the zeros on the diagonal by
 a fixed positive constant $\alpha$. However, this constant, if chosen badly, may influence the
 condition number of the linear correction system. If possible we prefer to modify the linear
 solver to handle the semi-definite problem directly (see Chapter~\ref{sec:linear_mg}).
\end{remark}

For the general case, denote by $\Pi_S$ the orthogonal projection onto a closed subspace $S$,
and let $\Jcal'(u^{\nu+\frac12})$ and $\Jcal''(u^{\nu+\frac12})$ be a generalized gradient and Hessian
of $\Jcal$ at $u^{\nu+\frac12} \in \R^n$, respectively.
Here we understand ``generalized'' in the sense that the entries are the
partial derivatives of $\Jcal$ wherever they exist, and arbitrary elsewhere.
If $\Jcal$ is block-separable nonsmooth, then,
by the block structure~\eqref{eq:trunated_space_block_structure}
of $W_\nu$, the global projection
$\Pi_{W_\nu} : \R^n \to W_\nu$ is
(up to the isomorphism $R$)
a block-diagonal matrix with diagonal blocks
\begin{align*}
    (R \Pi_{W_\nu}R^{-1})_{kk}
    = R_k \Pi_{W_\nu}R_k^{-1}
    = \Pi_{W_{\nu,k}}
    \in \R^{N_k \times N_k},
\end{align*}
where $\Pi_{W_{\nu,k}}$ is the projection onto $W_{\nu,k} = R_k W_\nu$.
For example, for simplex-constrained problems we get~\cite{graeser_sander:2014}
\begin{align*}
    \bigl(\Pi_{W_{\nu,k}}\bigr)_{ij} =
    \begin{cases}
        \delta_{ij}-\frac{1}{L-|\mathcal{N}_{\nu,k}^\circ(R_ku^{\nu+\frac12})|}
                 &\text{if $i,j \notin \mathcal{N}_{\nu,k}^\circ(w_k)$},\\
        0 &\text{otherwise}.
    \end{cases}
\end{align*}
Using $\Pi_{W_\nu}$ we can write the representations of the
truncated gradient and Hessian in terms of the coordinate system
in $\R^n$ as
\begin{align*}
    g_\nu &\colonequals \Jcal'(u^{\nu+\frac12})|_{W_\nu} = \Pi_{W_\nu}^T \Jcal'(u^{\nu+\frac12}), \\
    H_\nu &\colonequals \Jcal''(u^{\nu+\frac12})|_{W_\nu \times W_\nu} = \Pi_{W_\nu}^T \Jcal''(u^{\nu+\frac12}) \Pi_{W_\nu}.
\end{align*}
As $\Pi_{W_\nu}$ is block-diagonal $\Jcal''(u^{\nu+\frac12})|_{W_\nu \times W_\nu}$
inherits the sparsity pattern of $\Jcal''(u^{\nu+\frac12})$,
and all block-entries can be computed independently according to
\begin{align*}
    (R\Jcal''(u^{\nu+\frac12})|_{W_\nu \times W_\nu} R^{-1})_{ij}
    = \Pi_{W_{\nu,i}}^T \bigl(R\Jcal''(u^{\nu+\frac12}) R^{-1}\bigr)_{ij} \Pi_{W_{\nu,j}}.
\end{align*}
Using this matrix representation the solution of \eqref{eq:inexact_newton_problem}
is given by $v^\nu = -H_\nu^+ g_\nu$, where $(\cdot)^+$ is the Moore--Penrose pseudo-inverse,
and this is independent of the entries selected for non-existing partial derivatives.
See~\cite{graeser_sander:2014} for more details on TNNMG methods for vector-valued
phase-field problems.

\section{Inexact solution of linear subproblems}
\label{sec:linear_mg}

The final piece in the puzzle is how to produce suitable inexact solutions of the linear
correction systems~\eqref{eq:inexact_newton_problem}.
The convergence result in Theorem~\ref{thm:gs_stationary} only requires that the linear
correction shall not increase the energy, a feature that is always true due to the subsequent
line search step. However, for fast error reduction rates you need something that causes global information
exchange. Various alternatives exist; the best choice depends on the situation.

\subsection{Geometric multigrid iterations}

In many cases, the minimization problem for $\Jcal$ on $\R^n$ originates from the discretization of a partial
differential equation on a function space $\Scal_J$.
The default choice for an inexact solver for~\eqref{eq:inexact_newton_problem} is then a single $V$-cycle iteration of geometric
multigrid.  With this choice, the TNNMG algorithm can be interpreted as a multigrid method
with a nonlinear pre-smoother, a truncated linear coarse grid correction, and a particular line search.  One
expects multigrid convergence rates at least asymptotically, and these are indeed frequently
observed in practice~\cite{sander:2017,GraeserSackSander2009,GraeserKornhuber2009}.
Doing more than a single iteration, or a $W$-cycle iteration, may sometimes increase performance.

Implementations of geometric multigrid need minor modifications to cope with the restriction to the
truncated correction space $W_\nu$.
Suppose we have a hierarchy $\Scal_0 \subset \dots \subset \Scal_J$ of subspaces,
with the natural embedding of $\Scal_{k-1}$ into $\Scal_k$ given by the prolongation matrix $\Pcal_k \in \R^{\dim \Scal_k \times \dim \Scal_{k-1}}$.
We set $A_J \colonequals \Jcal''(u^{\nu + \frac 12})|_{W_\nu \times W_\nu}$ with the extension by zero to the orthogonal complement
of $W_\nu$ as described in the previous chapter.  From this we
construct a hierarchy of stiffness matrices by setting $A_{k-1} \colonequals \Pcal_k^T A_k \Pcal_k$.
We further assume the existence of linear smoothing operators $B_k : \R^{\dim \Scal_k} \to \R^{\dim \Scal_k}$.
A linear multigrid step for \eqref{eq:inexact_newton_problem} starting from a zero initial value
takes the form

\medskip

\begin{algorithm}[H]
\SetAlgoLined
 \KwIn{Given $u^{\nu+\frac{1}{2}}$}
 Set $r^J = - \Jcal'(u^{\nu + \frac{1}{2}})|_{W_\nu}$ \\
 \For{$k=J,\dots,1$} {
   Compute $\hat{v}^k = B_k r^k$ \\
   Set $r^{k-1} = \Pcal_k^T(r^k - A_k \hat{v}^k)$
  }
  Compute $\hat{v}^0 = B_0 r^0$ \\
 \KwOut{$\displaystyle v^\nu = \Pi_{W_\nu}\sum_{k=0}^J \Bigl(\prod_{l=k+1}^J \Pcal_l\Bigr) \hat{v}^k$}
\end{algorithm}

\medskip
In this simple form there is a single pre-smoothing step and no post-smoothing.
The extension
to a $V$-cycle with multiple pre- and post-smoothing steps is straightforward.

The first difference to a standard linear multigrid step
is the projection $\Pi_{W_\nu}$ in the last line.
It is needed since, while the method does in principle
act in the quotient space $\R^n/\op{ker}A_J=W_\nu$, it represents
all corrections in the larger space $\R^n$.
Since the prolongation operators are not aware of the kernel
they may create spurious contributions in $\op{ker}A_J$.
However the corrections in the quotient space $W_\nu$ are
invariant under those contributions and the latter can easily be removed
by the additional projection onto $W_\nu$.

The second difference is the choice of smoothers $B_k$.
Since the matrices $A_k$ are in general symmetric but positive semi-definite
only, standard choices may not work as expected.
Instead of approximating the inverse $A_k^{-1}$ the smoothers $B_k$ are
intended to approximate the pseudo-inverse matrices $A_k^+$ such that they
are invariant under any residual contribution in the kernel of the respective $A_k$.
In the following, we drop the level index $k$
for simpler notation and assume nested indices representing the block-structure of the matrices.
A common choice is a block Gau\ss--Seidel method based on a block-triangular
decomposition $A = D +L +L^T$, where $D_{ij} = \delta_{ij} A_{ij}$.
However, $D + L$ may be singular because the diagonal
entries $D_{ii}$ are in general only positive semi-definite.
Solving local systems involving the $D_{ii}$ therefore requires
a kernel-invariant local solver like, e.g., the CG method,
or subspace correction methods as in~\cite{LeeWuXuZikatanov2008}.

\begin{remark}
In certain situations, the following cheap modification can help to avoid numerical difficulties
due to the infinite condition number of $D_{ii}$.
For a small parameter $0<\alpha \ll 1$ set
\begin{align*}
    \bigl(\tilde{D}_{ii}\bigr)_{lm} \colonequals
    \begin{cases}
        (D_{ii})_{lm} & \text{if $l \neq m$}, \\
        \alpha & \text{if $l=m$ and $(D_{ii})_{lm}=0$}, \\
        \bigl((1+\alpha)D_{ii}\bigr)_{lm} & \text{if $l=m$ and $(D_{ii})_{lm}\neq 0$}.
    \end{cases}
\end{align*}
Then $\tilde{D}_{ii}$ is symmetric positive definite, and the resulting correction
obtained by a CG method for $\tilde{D}_{ii}$
can be viewed as a damped version of the one obtained for $D_{ii} + \Pi_{\op{ker} D_{ii}}$.
Numerical experiments in~\cite{graeser_sander:2014} showed that
the CG method was robust for $\alpha \geq 10^{-14}$,
and the overall convergence rates were hardly influenced as long as $\alpha \leq 10^{-4}$.
\end{remark}

The modified linear multigrid algorithm becomes particularly simple if the
restriction to the space $W_\nu$ is constructed by simply zeroing out certain matrix rows
and columns. In this case, a suitable smoother is obtained by modifying a scalar
Gauß--Seidel method to simply skip all rows with a zero matrix diagonal entry,
and return a zero correction there.

\subsection{Algebraic multigrid iterations}

Geometric multigrid is difficult to apply if the PDE problem is posed on a given grid without
a grid hierarchy.  Luckily, in this situation algebraic multigrid (AMG) iterations can be used
as well~\cite{falgout:2006,stueben:2001}.  In principle, any type of algebraic multigrid
iteration can be used as part of the TNNMG method. The overall TNNMG convergence rates
will depend on the quality of the AMG iteration as an independent solver.  An AMG step that
has merely value as a preconditioner is typically not sufficient because TNNMG expects
suitably scaled inexact solutions to the linear system~\eqref{eq:inexact_newton_problem}.
However, this may be cured by using an additional line search for the linear problem,
i.e., by using single step of an AMG-preconditioned gradient method for~\eqref{eq:inexact_newton_problem}.

If reduction to the truncated correction space $W_\nu$ makes the linear problems semi-definite, then the AMG
step needs to be modified in the same way as the geometric multigrid step described above.
As these modifications only concern the smoothers and additional projection operators, the
techniques used for geometric multigrid  work for AMG, too.

\subsection{Direct solvers and others}

If, for some reason, multigrid solvers are not available, then it is in principle possible
to use any other solver for the correction problems~\eqref{eq:inexact_newton_problem}.
For example, one possible choice is (a number of steps of) a preconditioned CG method,
which will even work out-of-the-box on semidefinite problems.

As a radical choice, it is even possible to use direct solvers, if the restriction to $W_\nu$
is implemented in a way that produces invertible correction matrices.  TNNMG then stops being
a multigrid method, and mutates into something related to nonsmooth Newton methods and
active-set method.  Certain predictor--corrector methods used for small-strain plasticity
problems can be interpreted as variants of the TNNMG method with a direct solver for
the correction problems~\cite{MartinCaddemi1994,sander:2017}.

\appendix
\section{Continuity of minimization}

In this section we present an auxiliary result on the continuity
of minimization operators.
More precisely we show
for $F:\R^n \to \Rinfty$ and a family of subspaces $(V_x)_{x \in \R^n}$ of $\R^n$ that
\begin{align*}
    m : \op{dom} F \to \R,
    \qquad m(x) \colonequals \min_{v\in V_x} F(x+v)
\end{align*}
is continuous if $F$ and the family $(V_x)$ are suitably well behaved.
Only Corollaries~\ref{cor:product_domain_continuity} and~\ref{cor:polyhedral_domain_continuity}
are used in this paper, in Section~\ref{sec:smoothers} on nonlinear smoothers.
We nevertheless state the more general Theorem~\ref{thm:min_continuity} here for
future reference.

\begin{definition}\label{def:stable_K_V}
    Let $\Vcal \colonequals (V_x)_{x\in \R^n}$ be a family of subspaces of $\R^n$.
    We call $K \subset \R^n$ \emph{stable with respect to translations in $\Vcal$}
    if for any sequence $x^\nu \in K$ with $x^\nu \to x \in K$
    we have:
    \begin{enumerate}
        \item
            Any sequence $v^\nu \in V_{x^\nu}$ with $v^\nu \to v$
            satisfies $v \in V_x$.
        \item
            If $v \in V_x$ such that $x+v \in K$, then there is a sequence
            $v^\nu \in V_{x^\nu}$ such that $x^\nu+v^\nu \in K$ and $v^\nu \to v$.
    \end{enumerate}
\end{definition}

The main result shows continuity of $m$ for suitable functions $F$
whose domain $\op{dom} F$ satisfies this stability assumption.

\begin{theorem}\label{thm:min_continuity}
    Let $F:\R^n \to \Rinfty$ such that $F$ is coercive, lower semi-continuous, and continuous
    on its domain.
    Furthermore, let $\Vcal =(V_x)_{x\in \R^n}$ be
    a family of subspaces of $\R^n$ such that
    $K=\op{dom} F$ is stable with respect to translations in $\Vcal$.
    Then $x \mapsto m(x) = \min_{v \in V_x} F(x+v)$ is continuous on $\op{dom} F$.
\end{theorem}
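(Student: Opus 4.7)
The plan is to establish continuity sequentially: fix $x \in \op{dom} F$ and an arbitrary sequence $x^\nu \to x$ with $x^\nu \in \op{dom} F$, and show $m(x^\nu) \to m(x)$ by proving the two bounds $\limsup m(x^\nu) \leq m(x)$ and $\liminf m(x^\nu) \geq m(x)$ separately. Before doing either, I would note that the minimum in the definition of $m$ is actually attained: since $F$ is coercive and lower semi-continuous, and $V_y$ is a (closed) subspace containing $0$, the set $\{v \in V_y : F(y+v) \leq F(y)\}$ is nonempty, bounded, and closed, hence compact, so the infimum is realized.

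For the upper bound I would use stability~(2) of Definition~\ref{def:stable_K_V}. Pick a minimizer $v \in V_x$ with $F(x+v) = m(x)$; in particular $x+v \in K$. Stability~(2) yields a sequence $v^\nu \in V_{x^\nu}$ with $x^\nu + v^\nu \in K$ and $v^\nu \to v$. Continuity of $F$ on its domain then gives $F(x^\nu+v^\nu) \to F(x+v) = m(x)$, and since $m(x^\nu) \leq F(x^\nu+v^\nu)$, one concludes $\limsup m(x^\nu) \leq m(x)$.

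For the lower bound I would argue along a subsequence extraction. For each $\nu$ pick a minimizer $v^\nu \in V_{x^\nu}$ of $F(x^\nu + \cdot)$; then
\begin{equation*}
    F(x^\nu + v^\nu) = m(x^\nu) \leq F(x^\nu),
\end{equation*}
and the right-hand side converges to $F(x)$ by continuity of $F$ on $K$, so $(F(x^\nu+v^\nu))$ is bounded above. Coercivity of $F$ then implies that $(x^\nu + v^\nu)$, and hence $(v^\nu)$, is bounded. Now fix any subsequence $(\nu_k)$ realizing $\liminf m(x^\nu)$; by Bolzano--Weierstrass extract a further subsequence (still denoted $\nu_k$) with $v^{\nu_k} \to v^*$. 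Stability~(1) forces $v^* \in V_x$, and lower semi-continuity of $F$ gives
\begin{equation*}
    m(x) \leq F(x + v^*) \leq \liminf_{k\to\infty} F(x^{\nu_k} + v^{\nu_k}) = \liminf_{k\to\infty} m(x^{\nu_k}).
\end{equation*}
Since this inequality holds for an arbitrary subsequence attaining $\liminf m(x^\nu)$, we obtain $m(x) \leq \liminf m(x^\nu)$, completing the proof.

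The main obstacle I anticipate is really just the boundedness of $(v^\nu)$; this is where coercivity of $F$ must be combined with the continuity of $F$ at $x$ to get a uniform upper bound on $F(x^\nu + v^\nu)$. Everything else is a routine combination of lower semi-continuity, continuity on the domain, and the two clauses of the stability hypothesis, which were tailored precisely so that minimizers can be transported between the shifted subspaces $V_{x^\nu}$ and $V_x$ in both directions.
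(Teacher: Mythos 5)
Your proof is correct and uses the same ingredients as the paper's own argument: attainment of the minimum via coercivity and lower semi-continuity, stability~(2) plus continuity of $F$ on its domain for $\limsup_\nu m(x^\nu)\le m(x)$, and boundedness from coercivity, a subsequence extraction, and stability~(1) for the reverse bound. The paper merely packages the two bounds into a single argument by contradiction (placing the limit of the minimizers in $\op{dom} F$ via its closed-sublevel-set lemma and then using continuity of $F$ there, where you invoke lower semi-continuity directly), so the difference is purely organizational.
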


\begin{proof}
    First we note that coercivity and lower semi-continuity
    of $F$ imply that for any $z\in K$ there is exists a minimizer
    $\Acal(z) \colonequals \argmin_{v\in z+V_z} F(z+v) \in K$ (not necessarily unique)
    such that $m(z) = F(\Acal(z))$ is well-defined.

    Now let $x^\nu \in K$ be a sequence with $x^\nu \to x \in K$
    and assume that $F(\Acal(x^\nu)) \not \to F(\Acal(x))$.
    Continuity of $F$ on $K$ and coercivity imply
    \begin{align*}
        -\infty < \min_{z\in \R^n} F(z) \leq F(\Acal(x^\nu)) \leq F(x^\nu) \leq C < \infty.
    \end{align*}
    Hence $F(\Acal(x^\nu))$ is bounded and
    by coercivity $\Acal(x^\nu)$ is also bounded.
    Then there must be a subsequence (w.l.o.g.\ also indexed by $\nu$)
    such that $\Acal(x^\nu) \to \hat{x}$
    and Lemma~\ref{lemma:closed_sublevelset} gives $\hat{x} \in K$.
    Hence we have
    \begin{align}\label{eq:min_subsec_conv}
        F(\Acal(x^\nu)) \to F(\hat{x}) \neq F(\Acal(x))
    \end{align}
    by continuity of $F$ on $K$.
    On the other hand the stability of $K$ with respect to $\Vcal$ together
    with $V_{x^\nu} \ni \Acal(x^\nu) - x^\nu \to \hat{x} - x$ gives $\hat{x} -x \in V_x$
    and thus $F(\hat{x}) > F(\Acal(x))$.

    Finally, the stability of $K$ with respect to $\Vcal$ also implies that there
    is a sequence $v^\nu \in V_{x^\nu}$ such that $x^\nu + v^\nu \in K$
    and $v^\nu \to \Acal(x) - x \in V_x$. Then we have
    \begin{align*}
        F(\Acal(x^\nu)) \leq F(x^\nu + v^\nu) \to F(\Acal(x)) < F(\hat{x}),
    \end{align*}
    which contradicts \eqref{eq:min_subsec_conv}.
\end{proof}

The first condition in Definition~\ref{def:stable_K_V} is trivially
satisfied if $V_x$ is a fixed subspace of $\R^n$.
A simple example where the second condition also holds are
domains with product structure.

\begin{corollary}
    \label{cor:product_domain_continuity}
    Let $F:\R^n \to \Rinfty$ be coercive, lower semi-continuous, and continuous on its domain.
    Furthermore let $K \colonequals \op{dom} F = K_1 \times K_2$ be a product
    of subsets $K_1 \subset \R^{n_1}$ and $K_2 \subset \R^{n_2}$
    and let $V = \R^{n_1} \times \{0\}$.
    Then $K$ is stable with respect to translations in $V$,
    and $m$ is continuous on $K$.
\end{corollary}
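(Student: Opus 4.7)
The plan is to reduce the corollary to a direct application of Theorem~\ref{thm:min_continuity}. Since $V_x = V$ does not depend on $x$, the only thing to verify is the two stability conditions of Definition~\ref{def:stable_K_V}; once these are in hand, continuity of $m$ follows immediately from the theorem.

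For the first condition, note that $V = \R^{n_1} \times \{0\}$ is a closed subspace of $\R^n$, so any convergent sequence $v^\nu \in V$ with $v^\nu \to v$ automatically has its limit in $V$. This part is essentially free.

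The second condition is the one that uses the product structure. Given $x^\nu \to x$ in $K = K_1 \times K_2$, write $x = (x_1, x_2)$ and $x^\nu = (x_1^\nu, x_2^\nu)$, so that $x_1^\nu \to x_1$ in $K_1$ and $x_2^\nu \to x_2$ in $K_2$. For $v = (v_1, 0) \in V$ with $x + v = (x_1 + v_1, x_2) \in K$, we have in particular $x_1 + v_1 \in K_1$. The natural choice is then
\begin{equation*}
    v^\nu \colonequals (v_1 + x_1 - x_1^\nu,\; 0) \in V,
\end{equation*}
which satisfies $v^\nu \to v$ and $x^\nu + v^\nu = (x_1 + v_1, x_2^\nu) \in K_1 \times K_2 = K$. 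Thus both conditions of Definition~\ref{def:stable_K_V} are satisfied, so $K$ is stable with respect to translations in $V$, and Theorem~\ref{thm:min_continuity} gives continuity of $m$ on $K$.

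There is really no obstacle here; the only mild subtlety is picking the sequence $v^\nu$ in the second condition so that the first component lands exactly in $K_1$ without any further assumptions on $K_1$ (such as convexity or openness). The shift $v_1 + x_1 - x_1^\nu$ achieves this by construction because it produces the \emph{fixed} point $x_1 + v_1 \in K_1$, sidestepping any need for a local argument inside $K_1$.
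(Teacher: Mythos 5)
Your proof is correct and follows essentially the same route as the paper: it verifies the first stability condition trivially because $V$ is a fixed closed subspace, and for the second condition it uses exactly the same shifted sequence $v^\nu = (x_1 + v_1 - x_1^\nu, 0)$, which lands $x^\nu + v^\nu = (x_1+v_1, x_2^\nu)$ in $K_1 \times K_2$. Nothing is missing.
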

\begin{proof}
    To show stability of $K$ with respect to translations in $V$
    let $x^\nu \in K_1 \times K_2$ with $x^\nu \to x \in K_1 \times K_2$
    and $v \in (K_1 \times K_2 - x) \cap V$. Then we have
    $v^\nu = (x_1+v_1-x^\nu_1,0) \in (K-x^\nu) \cap V$ and $v^\nu \to v$.
\end{proof}

Another example of stability in the sense of Definition~\ref{def:stable_K_V}
are convex polyhedral domains. A property similar to the second condition in Definition~\ref{def:stable_K_V}
and related to the stability of tangent cones was shown in
\cite{GraeserSander2014} for this case. We will give a
direct proof for convex polyhedra without using tangent cones.

\begin{corollary}
    \label{cor:polyhedral_domain_continuity}
    Let $F:\R^n \to \Rinfty$ be coercive, lower semi-continuous, and continuous on its domain.
    Furthermore let $K =\op{dom} F$ be a (not necessarily closed)
    convex polyhedron, and $V$ any fixed subspace of $\R^n$.
    Then $K$ is stable with respect to translations in $V$,
    and $m$ is continuous on $K$.
\end{corollary}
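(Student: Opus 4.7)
The strategy is to apply Theorem~\ref{thm:min_continuity} to the constant family $\Vcal = (V_x)_{x\in \R^n}$ with $V_x \equiv V$, for which it suffices to verify that the convex polyhedron $K = \op{dom} F$ is stable with respect to translations in $V$ in the sense of Definition~\ref{def:stable_K_V}. The first condition there is trivial, since $V$ is a fixed closed subspace. All of the work therefore lies in the second condition: given $x^\nu \to x$ in $K$ and $v \in V$ with $x + v \in K$, one must produce $w^\nu \in V$ with $x^\nu + w^\nu \in K$ and $w^\nu \to v$.

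To do this I would encode the polyhedral data algebraically, writing $K = \{z \in \R^n : A_\le z \le b_\le,\ A_< z < b_<\}$ as a finite intersection of closed and strict affine half-spaces, and $V = \ker B$ for some matrix $B$. The main analytic tool is Hoffman's error bound for linear systems, which provides a constant $\kappa = \kappa(A_\le, B) > 0$, depending only on the left-hand-side matrices, such that whenever $\{w : A_\le w \le c,\ Bw = d\}$ is nonempty, every $p \in \R^n$ lies within distance $\kappa(\|(A_\le p - c)_+\| + \|Bp - d\|)$ of it. I would apply this bound to $S_\nu \colonequals \{w : Bw = 0,\ A_\le w \le b_\le - A_\le x^\nu\}$ with test point $p = v$. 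The set $S_\nu$ is nonempty because $w = 0$ lies in it (as $x^\nu \in K \subseteq \bar K$). Since $Bv = 0$ and $A_\le(x + v) \le b_\le$, the Hoffman residual at $p = v$ reduces to the positive part of $A_\le(x^\nu - x) + (A_\le(x + v) - b_\le)$, which is bounded by $\|A_\le\|\,\|x^\nu - x\|$. Hence there exists $w^\nu \in V$ with $A_\le(x^\nu + w^\nu) \le b_\le$ and $\|w^\nu - v\| = O(\|x^\nu - x\|)$, so $w^\nu \to v$.

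It then remains to upgrade $\bar K$-feasibility to $K$-feasibility, i.e., to verify the strict constraints. Since $A_<(x + v) < b_<$ holds strictly and $x^\nu + w^\nu \to x + v$, continuity yields $A_<(x^\nu + w^\nu) < b_<$, and hence $x^\nu + w^\nu \in K$, for all sufficiently large $\nu$; for the finitely many earlier indices one simply redefines $w^\nu \colonequals 0 \in V$, using $x^\nu \in K$, which does not disturb the limit $w^\nu \to v$. The main conceptual obstacle is to identify the correct quantitative form of Hoffman's lemma (fixed left-hand-side matrices with perturbed right-hand-sides) and to observe that the required feasibility of $S_\nu$ is inherited for free from $x^\nu \in K$ itself; everything else is a routine continuity argument, after which the continuity of $m$ on $K$ follows from Theorem~\ref{thm:min_continuity}.
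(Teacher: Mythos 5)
Your proof is correct, but it takes a different route from the paper. The paper's own argument is completely elementary: writing $K=\{z \st \inner{b_i}{z} \lhd_i c_i\}$ with $\lhd_i$ either strict or non-strict, it simply rescales the given direction, setting $v^\nu = \lambda^\nu v$ with an explicit factor $\lambda^\nu = \min_i \lambda^\nu_i \to 1$ chosen so that each constraint $\inner{b_i}{x^\nu + \lambda v} \lhd_i c_i$ holds for $\lambda \in [0,\lambda^\nu_i]$; since $v^\nu$ is a scalar multiple of $v$ it automatically lies in $V$, and strict and non-strict constraints are handled uniformly in one formula. You instead invoke Hoffman's error bound for the closed system $\{w \st Bw=0,\ A_\le w \le b_\le - A_\le x^\nu\}$, verify its consistency via $w=0$, bound the residual at $p=v$ by $\|A_\le\|\,\|x^\nu-x\|$, and then restore the strict constraints by a separate limiting argument plus a fallback $w^\nu=0$ for finitely many indices; all of these steps are sound. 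What your approach buys is a quantitative bound $\|w^\nu - v\| \le \kappa\|A_\le\|\,\|x^\nu-x\|$ with a constant depending only on the constraint matrices (the paper's scaling also gives an $O(\|x^\nu-x\|)$ rate, but with a constant that degrades as $\inner{b_i}{v}$ becomes small), at the cost of importing a nontrivial external result and a slightly more involved treatment of the strict inequalities, whereas the paper's construction is self-contained and shorter. Since the corollary only needs $w^\nu \to v$, both arguments fully suffice.
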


\begin{proof}
    First we note that $K$ can be written as
    \begin{equation*}
        K = \big\{ x \in \R^n \st
            \inner{b_i}{x} \lhd_i c_i \text{ for } i=1,\dots,k \big\},
    \end{equation*}
    where  $\lhd_i$ is either ``$<$'' or ``$\leq$'' for each $i$.
    To show stability of $K$ with respect to translations in $V$
    let $x^\nu \in K$ with $x^\nu \to x \in K$, and $v \in V$
    with $x+v \in K$.
    For each $\nu$ and $i$ we define
    \begin{align*}
        \lambda^\nu_i \colonequals
        \begin{cases}
            1 & \text{ if } \inner{b_i}{v} \leq 0,\\
            1 & \text{ if } \inner{b_i}{v} > 0 \text{ and } \inner{b_i}{x^\nu-x} \leq 0,\\
            \max\Big\{0,1-\frac{\inner{b_i}{x^\nu-x}}{\inner{b_i}{v}}\Big\}
                & \text{ if } \inner{b_i}{v} > 0 \text{ and } \inner{b_i}{x^\nu-x} > 0,
        \end{cases}
    \end{align*}
    and $\lambda^\nu = \min_i \lambda^\nu_i \geq 0$.
    Then we have $\inner{x^\nu + \lambda v}{b_i} \lhd_i c_i$ for all $\lambda \in [0,\lambda^\nu_i]$
    and thus $v^\nu = \lambda^\nu v \in (K-x^\nu)\cap V$.
    Furthermore $x^\nu \to x$ implies $\lambda^\nu \to 1$ and hence $v^\nu \to v$.
\end{proof}


\bibliographystyle{abbrv}
\bibliography{graeser_sander_block_separable_tnnmg}

\end{document}